\documentclass[times,11pt,reqno]{amsproc}

\usepackage[top=3cm, bottom=2cm, outer=2cm, inner=2cm, heightrounded,marginparwidth=3cm, marginparsep=0cm]{geometry}
\usepackage{amsmath}
\usepackage{amssymb}
\usepackage{amsthm}
\usepackage{amsfonts,mathrsfs}

\usepackage{hyperref}
\usepackage{booktabs}
\usepackage{array,enumitem}

\usepackage{tikz}
\usetikzlibrary{arrows.meta, math,positioning, calc}

\usepackage{xcolor}

\newtheorem{theorem}{Theorem}[section]
\newtheorem{proposition}[theorem]{Proposition}
\newtheorem{lemma}[theorem]{Lemma}
\newtheorem{corollary}[theorem]{Corollary}

\newtheorem*{theorem*}{Theorem}

\theoremstyle{definition}
\newtheorem{definition}[theorem]{Definition}

\newtheorem{example}[theorem]{Example}
\newtheorem{remark}[theorem]{Remark}

\numberwithin{equation}{section}

\def\R#1{\mathbb{R}^{#1}}

\def\scal#1#2{\langle #1; #2 \rangle}

\newcommand\fie{\Bbbk}

\def\alg{\mathbb{A}}

\def\valg{\mathbb{V}}
\def\ualg{\mathbb{U}}

\newcommand{\fusion}{\mathscr{F}}

\newcommand{\Id}{\operatorname{Id}}

\newcommand{\dum}{\,\cdot\,\,}

\DeclareMathOperator{\tr}{tr}

\DeclareMathOperator{\Aut}{Aut}

\DeclareMathOperator{\spec}{spec}

\DeclareMathOperator{\supp}{supp}
\DeclareMathOperator{\Spec}{Spec}

\DeclareMathOperator{\St}{St}
\DeclareMathOperator{\Ax}{Ax}
\DeclareMathOperator{\Sym}{Sym}

\def\ualg{\mathbb{U}}
\def\malg{\mathbb{M}}
\newcommand{\vm}[1]{\malg(#1)}
\newcommand{\refl}{h}

\begin{document}

\title[Peirce spectral rigidity of decorated incidence algebras]
 {On the Peirce spectral rigidity of decorated incidence algebras}

\author[D.~J.~F.~Fox]{Daniel J. F. Fox}
\address{Departamento de Matem\'atica Aplicada, Escuela T\'ecnica Superior de Arquitectura, Universidad Polit\'ecnica de Madrid, Av. Juan de Herrera 4, 28040 Madrid, Spain}
\email{daniel.fox@upm.es}

\author[V.~G.~Tkachev]{Vladimir G. Tkachev}
\address{Department of Mathematics, Link\"oping University, 58183, Sweden}
\email{vladimir.tkatjev@liu.se}
\thanks{V.G.T. has been supported by the Stiftelsen L\"angmanska kulturfonden, Grant BA26-2924.}

\subjclass{05B07, 15A75, 05B25, 17A30, 47C05, 05C65}
%
\keywords{Decorated incidence algebra, Partial Steiner triple system, Peirce decomposition, Spectral rigidity, Symmetric matrix model, Axial algebra}
\date{\today}

\begin{abstract}
The multiplication operator of an idempotent in a metrized commutative nonassociative algebra is a self-adjoint linear endomorphism whose spectrum reflects the algebraic and geometric structure encoded in the algebra. Although for general algebras the spectra of idempotents can behave arbitrarily, for the most interesting classes -- among them algebras of Clifford and Jordan type arising in geometry -- the spectrum is highly constrained. We consider a family of algebras determined by a partial Steiner triple system (PSTS) with blocks decorated by signs, each block determining an idempotent; even for the simplest PSTS the resulting algebras can be quite complicated. For decorations of the Grassmannian PSTS $G_2(m)$, whose blocks are $3$-subsets of an $m$-element set and whose points parametrize an underlying incidence geometry, the spectra behave rigidly: the spectrum with multiplicity of a block idempotent is a function of a single combinatorial parameter, the number of negatively signed Pasch configurations -- quadrilateral subconfigurations -- containing the block. The resulting block spectra organize into a spectral profile, recovered by binomial inversion from a hierarchy of moment invariants. For a distinguished family of decorations, the associated algebras are axial algebras satisfying a fusion law independent of $m$, and for these algebras there is given an alternative explicit matrix model exhibiting the rigidity directly; the even part of this model is a direct sum containing a polar algebra summand, linking the construction to polar algebras and symmetric Clifford systems.
\end{abstract}

\maketitle

\section{Introduction}

\subsection{Background and motivation}
The multiplication operator $L(c):x\mapsto c\circ x$ of an idempotent $c$ in a metrized commutative nonassociative algebra is a self-adjoint linear endomorphism of the underlying vector space, and its spectrum is accordingly a natural linear-algebraic invariant attached to the idempotent. When the structure of the algebra is determined by some underlying combinatorial data, the spectra of its idempotents can be constrained in ways that reflect the combinatorial data defining the algebra. The present paper studies this phenomenon for a family of commutative algebras built from partial Steiner triple systems.

A \emph{partial Steiner triple system} (PSTS) is a pair
$G=(X,\mathcal B),$ where $X$ is a finite set and $\mathcal B$ is a family of $3$-element
subsets of $X$, called \emph{blocks}, such that any two distinct points of
$X$ belong to at most one block (if any two distinct points of $X$ belong to exactly one block, $G$ is called a Steiner triple system, abbreviated STS). The cardinality $n=|X|$ is called the order of the system.
We say that $i$ and $j$ are \emph{collinear}, and write $ i\sim j$, if there is a block containing $i$ and $j$. In this case the unique point $k\in X$ such that $ \{i,j,k\}\in\mathcal B,$ is written $$ i\wedge j=k. $$
A \emph{Pasch configuration} (also called a \emph{quadrilateral}) in a PSTS $G=(X,\mathcal B)$ is a set of $4$ blocks as in \eqref{pasch} whose union consists of $6$ points
\begin{align}\label{pasch}
&\{\{i, j, k\}, \{i, b, c\},\{a, j, c\},\{a, b, k\}\}\subseteq\mathcal B.
\end{align}

Given a field $\fie$, to a PSTS $G = (X, \mathcal B)$ and parameters $\alpha,\beta,\gamma\in\fie$, not all zero, we associate a commutative algebra with underlying vector space $\fie\{X\}$ and generated by the distinguished basis $\{e_{i}: i \in X\}$ subject to the relations (see \cite[Lemma $2.1$]{Fox2022})
\begin{equation}\label{eq:algebraSdef}
\begin{aligned}
e_i\circ e_i&=\gamma e_i,\\
e_i\circ e_j&=
\left\{
  \begin{array}{ll}
    \alpha(e_i+e_j)+\beta e_{i\wedge j},
    & \hbox{if $i\sim j$, $i\neq j$,}\\
    0,
    & \hbox{if $i\not\sim j$.}
  \end{array}
\right.
\end{aligned}
\end{equation}
Throughout the paper $\fie$ is supposed to have characteristic $0$.
For $\alpha=\gamma=0$ and $\beta=1$, the resulting algebra is denoted by $\alg(G,\fie,\circ)$ and is called the \emph{incidence algebra} of $G$. For a Steiner triple system, adding a unit to the incidence algebra yields a commutative nonassociative algebra studied by Mendelsohn \cite{Mendelsohn}. Other choices of the parameters recover several well-known algebras. The parameters $\alpha = 1/(1-n)$ and $\beta = 0$ yield the $n$-dimensional algebras on the standard representation of the symmetric group $S_{n+1}$ studied by Griess \cite{Dong-Griess} and Harada \cite{Harada}. The choice $ \beta=-\alpha$, $\gamma=1,$ yields the Matsuo algebras associated with partial Steiner triple systems such as Fischer spaces (up to the conventional normalization of the parameter $\alpha$) \cite{DeMedts-Rehren, Hall-Rehren-Shpectorov,Matsuo-3transposition}.

A more general class of algebras is obtained by allowing the parameters $\alpha$ and $\beta$ in \eqref{eq:algebraSdef} to depend on $i$ and $j$. Here we consider the particular case given by modifying the incidence algebra by choosing for every block a sign
$$
\delta(\{i,j,k\})\in\{\pm1\},
$$
and replacing the multiplication relation $e_i\circ e_j=e_{i\wedge j}$
by
$$
e_i\circ_\delta e_j
=
\delta(\{i,j,i\wedge j\})\,e_{i\wedge j}.
$$
The resulting algebra $\alg^\delta(G,\fie,\circ_\delta)$ is called the \textit{decorated incidence algebra}
associated with the pair $(G,\delta)$ comprising a PSTS with blocks labeled by signs. The usual incidence algebra of $G$ is recovered by using the trivial decoration in which all signs are $+$.

Motivation for studying decorated incidence algebras comes from the study of Hsiang algebras, a class of metrized commutative algebras introduced by the second author in relation to the classification of cubic minimal cones \cite{Nadirashvili-Tkachev-Vladuts, Fox-Tkachev} (in \cite{Nadirashvili-Tkachev-Vladuts} they are called \emph{REC algebras}). It is an empirical observation that for a certain important subclass of Hsiang algebras their cubic forms (given by metrically pairing an element with its square) are signed sums of monomials indexed by particular PSTSs which we call Hsiang PSTS (a direct combinatorial characterization of the Hsiang PSTSs is not yet known). The decorated incidence algebra construction yields a way of recovering the Hsiang algebra from the associated \emph{Hsiang PSTS}. For example, for different decorations applied to the PSTS $(9_2, 6_3)$ there result non-isomorphic $9$-dimensional algebras with cubic forms equal to the determinant and permanent of a $3\times 3$ matrix and the algebra associated with the determinant is Hsiang, as is described briefly in Example~ \ref{rem:detper}.

In general the decorated incidence algebras associated with a given $G$ and different decorations need not be isomorphic. The automorphism group of the PSTS $G$ acts on the decorations of $G$ by pullback, so acts on pairs $(G, \delta)$, and the associated decorated incidence algebras are isomorphic. However, distinct decorations of a fixed PSTS can determine isomorphic decorated incidence algebras, reflecting a kind of gauge symmetry of the construction. Given a
sign assignment
$$
\varepsilon:X\to\{\pm1\},
$$
one may rescale the basis vectors according to
$$
e_i\mapsto \varepsilon(i)e_i.
$$
This induces a transformation of decorations,
$$
\delta^\varepsilon(\{i,j,k\})
=
\varepsilon(i)\varepsilon(j)\varepsilon(k)\,
\delta(\{i,j,k\}).
$$
Two decorations related in this way are called \textit{gauge equivalent}. Gauge equivalent decorations give rise to isomorphic decorated incidence
algebras. Consequently, invariants of the decorated incidence algebra reflect properties of the isomorphism class of the PSTS and the gauge-equivalence class of the decorated PSTS. Note that it is not claimed that automorphisms of $G$ and gauge equivalencies generate the automorphism group of the decorated incidence algebra, although this is true for some examples.

For an operator $M$, let
$\spec(M)$ denote the set of distinct eigenvalues of $M$, and let
$\Spec(M)$ denote the multiset spectrum, where eigenvalues are counted
with algebraic multiplicity. Thus
$$
\Spec(M)=\{1,(-1)^2,0^3\}
$$
means that $1$, $-1$, and $0$ occur with multiplicities $1$, $2$, and
$3$, respectively.
The \emph{Peirce spectrum} of an idempotent $c$ in $\alg^\delta(G,\fie,\circ_\delta)$ is the multiset spectrum $\Spec(L(c))$ of its multiplication operator $L_{\delta}(c):x\mapsto c\circ_\delta x$.

For any PSTS and any block
$
b=\{i,j,k\}\in\mathcal B,
$
the element
\begin{align}\label{blockidempotent}
c_b^\delta
=
\frac12\,\delta(b)(e_i+e_j+e_k)
\end{align}
is an idempotent of the decorated incidence algebra $\alg^\delta(G,\fie,\circ_\delta)$ called the \emph{block idempotent} associated with $b$. The principal object of study here is the multiset Peirce spectrum
$
\Spec\bigl(L_\delta(c_b^\delta)\bigr)
$
of a block idempotent, $b\in\mathcal B$.
Whenever multiplicities are irrelevant, we write $\spec(L_\delta(c_b^\delta))$ for the corresponding set of distinct
eigenvalues.

In $\alg^\delta(G,\fie,\circ_\delta)$ there can be many idempotents besides the block idempotents and it need not be that an algebra automorphism of $\alg^\delta(G,\fie,\circ_\delta)$ sends block idempotents to block idempotents (Example~\ref{rem:detper} gives an example). While the complete classification of idempotents in a decorated incidence algebra is an interesting problem in its own right, their structure depends strongly on the decoration and a complete analysis appears is complicated. However, automorphisms of $\alg^\delta(G,\fie,\circ_\delta)$ induced by automorphisms of $G$ do preserve block idempotents, and the sets of block idempotents and their Peirce spectra are natural invariants attached to a decorated PSTS, which are the principal object of study in the present paper. Alternatively, as algebra automorphisms preserve the spectra of idempotents, the set of Peirce spectra of block idempotents is the same as the set of Peirce spectra of the algebra automorphism group orbit of the block idempotents, so is an invariant of the decorated incidence algebra.

For general decorated PSTSs, one expects the Peirce spectra of block idempotents to depend sensitively on the global pattern of signs
defining the decoration. However, Hsiang PSTSs exhibit a remarkable
rigidity phenomenon discovered in \cite{Fox-Tka2026b}. More precisely, to each Pasch
configuration one associates the gauge-invariant sign
$\kappa_\delta(P)\in\{\pm1\}$ given by the product of the signs of its blocks
(see \eqref{kappaP}). If
$$
\nu_\delta(b)
=
\#\{P\ni b:\kappa_\delta(P)=1\},
$$
denotes the number of positively signed Pasch configurations containing
a block $b$, then the Peirce spectrum of a block idempotent of a Hsiang PSTS is supported
on the fixed set
$$
\Lambda=
\left\{
1,-1,\tfrac12,-\tfrac12
\right\},
$$
independently of the decoration, while all spectral multiplicities are
completely determined by the single local parameter $\nu_\delta(b)$.
Thus, despite the freedom in choosing decorations, the entire Peirce spectrum is determined by the single integer $\nu_\delta(b)$.

\subsection{The main results}
Although this rigidity phenomenon is striking, its origin
remains unclear from a purely combinatorial point of view.
The known proof relies heavily on special structural properties of
Hsiang algebras, and it is not clear how these should be formulated
intrinsically in terms of the underlying PSTS.
This raises the question of whether this spectral rigidity is a peculiarity of Hsiang PSTSs or a feature shared by a broader class of decorated partial Steiner triple systems.

The present paper addresses this question for the combinatorial Grassmannian triple systems $G_2(m)$ (see \cite[Chapter $9.3$]{Brouwer-Cohen-Neumaier}, \cite{Prazmowska, Petelczyc-Prazmowska-Prazmowska, SanigaHolweckPracna}), defined as the PSTS on an $m$-element set $\Omega$ whose points are $2$-elements subsets of $\Omega$ and whose blocks are triples of points all contained in the same $3$-element subset of $\Omega$. These geometries possess a
rich and highly regular incidence structure, yet are considerably more
accessible combinatorially than the Hsiang systems. We show that a
closely related rigidity phenomenon persists in this setting and admits
a direct geometric interpretation in terms of Pasch
configurations. In this sense, the Grassmannian family provides a
natural testing ground for understanding the mechanism responsible for
spectral rigidity. However, both the
methods and the proofs developed here are independent of those
in \cite{Fox-Tka2026b}; the approach relies only on the intrinsic
combinatorics and incidence geometry of Grassmannian systems and makes no reference to
structural properties of Hsiang algebras.

The points, blocks, and Pasch configurations of the Grassmannian geometry $G_2(m)$ on the $m$-set $\Omega$ are parameterized by the $k$-subsets of $\Omega$, $\binom{\Omega}{k}$, for $k \in \{2, 3, 4\}$.
For $m = 5$ and $m = 6$ these yield the Desargues configuration
$G_2(5)=(10_3)$ and the Cayley--Salmon configuration $G_2(6)=(15_4,20_3)$ \cite{Petelczyc-Prazmowska-Prazmowska}.

A useful feature of the Grassmannian family is its recursive
structure. Fixing a block decomposes the geometry into several local
triangular pieces and a residual geometry isomorphic with
$G_2(m-3)$. The Peirce decomposition of the corresponding block idempotent mirrors
this decomposition: the local triangles account for all nonzero
eigenvalues, while the residual Grassmannian corresponds with the
zero eigenspace. Consequently, the associated copy of $G_2(m-3)$ inside
$G_2(m)$ is detected spectrally through the multiplicity of the zero
eigenvalue. This appears to be a distinctive
feature of the Grassmannian family that has no direct analogue in the
general PSTS setting considered in \cite{Fox-Tka2026b}.

The first of our main results, stated precisely in Theorem~\ref{thm:localpaschspectrum}, shows that the spectrum of a block idempotent is controlled by a simple combinatorial invariant. Given a block $b$, a point of $\Omega\setminus\supp(b)$ determines a unique Pasch configuration $P$ containing $b$ and hence contributes a sign $\kappa_\delta(P) = \pm1$; let
$$
n_{-1}^{\delta}(b)
$$
denote the number of occurrences of the sign $-1$. We prove that the complete multiset spectrum of the corresponding block idempotent is determined by the ambient parameter $m$ and the integer $n_{-1}^{\delta}(b)$, all spectral multiplicities being explicit functions of these two quantities. In particular, the multiplicity of the Peirce eigenvalue $-1$ is precisely the number of negatively signed Pasch configurations containing the chosen block. Moreover, every value
$$
0\le n_{-1}^{\delta}\le m-3
$$
is realizable by a suitable decoration, so that $G_2(m)$ carries exactly $m-2$ distinct local spectral types; for example, $G_2(5)$ admits three types and $G_2(6)$ admits four.

It is instructive to compare this with the situation for Hsiang PSTSs, where the construction singles out two distinguished decorations: for the incidence decoration all Pasch configurations are positive, while for the Hsiang decoration all are negative. In both cases every block idempotent has the same multiset spectrum, so that the corresponding spectral profile is supported on a single spectral type. These two spectrally homogeneous realizations of the same geometry retain a vestige of the strong rigidity of Hsiang algebras, for which the spectrum is independent of the choice of block idempotent. In the Grassmannian family this fails. The incidence decoration still yields a single spectral type, but for $G_2(m)$ most other decorations give rise to several distinct spectral types simultaneously.

The local classification leads naturally to a global one. The \emph{spectral profile} of a decorated Grassmannian geometry records how many blocks carry each of the local spectral types. It is identified with the degree distribution of the $4$-uniform hypergraph whose hyperedges are the negative Pasch configurations. We then develop a hierarchy of higher-moment invariants, interpret them as overlap counts for families of negative Pasch configurations sharing a common block, and show that the spectral profile is uniquely recoverable from these invariants by binomial inversion. Since Pasch signs are unaffected by gauge transformations, all these invariants depend only on the gauge-equivalence class of the decoration. Taken together, the results reveal a direct connection between Grassmannian incidence geometry, Pasch combinatorics, hypergraph theory, and the spectral theory of commutative nonassociative algebras.

Two further results place the Grassmannian family within a wider algebraic
context. First, both the plain incidence algebra of $G_2(m)$ and its decorated incidence algebra for the star-decorations of Definition~\ref{def:stardec} admit
concrete matrix realizations. Given $\omega \in \Omega$, the star decoration $\delta_{\omega}$ assigns $-1$ to each block $b$ whose support contains $\omega$, so that for every such block, the number $n_{-1}^{\delta}(b)$ of negative Pasch configurations containing the block is the maximum possible, $m-3$.
The incidence algebra of $G_2(m)$ is
isomorphic to the space of symmetric $m\times m$ matrices with vanishing diagonal
equipped with the projected Jordan product.
Adjoining to this algebra a copy of the underlying Euclidean space $\ualg$, with a product
combining the Jordan structure on matrices with the ordinary
matrix-vector action, produces a second explicit algebra
$\vm{\ualg}=\ualg\oplus\Sym_0(\ualg)$, and we show that this
is isomorphic to the star-decorated algebra
$\alg^{\delta_\omega}(G_2(m))$ for $\ualg$ of dimension $m-1$.
Under this
identification the block idempotents at $\omega$ correspond to simple
rank-one data built from unit vectors of $\ualg$, the algebra is
generated by $m-1$ of them, and $\alg^{\delta_\omega}(G_2(m))$
decomposes as $\ualg\oplus\alg^{\mathbf1}(G_2(m-1))$, the natural module
of $\ualg$ together with the undecorated incidence algebra one dimension down, $\alg^{\mathbf1}(G_2(m-1))$.
Corollary~\ref{cor:gradingauto} shows this decomposition is a canonical $\mathbb Z_2$-grading. The two smallest members of this matrix-model family turn out to be exact
radial Hsiang algebras, more precisely, polar algebras built from a classical symmetric Clifford system \cite{Ferus-Karcher-Munzner, Tkachev-cliff}; see Examples~\ref{ex:polar3} and \ref{ex:mutant}.

Second, the star decorations turn out to be distinguished also from the point of view of axial algebras \cite{Khasraw-McInroy-Shpectorov}. For any decoration, a block idempotent is an axis precisely when every Pasch configuration through the block is negative, equivalently when the local spectral parameter attains its maximal value, in which case the Peirce decomposition refines the canonical $\mathbb Z_2$-grading of
Corollary~\ref{cor:gradingauto}. For a general decoration it could occur that a block idempotent is an axis but the decorated incidence algebra is not axial.
For the star decoration $\delta_\omega$, every block through $\omega$ is an axis and no other block is; since these idempotents generate the algebra (Theorem~\ref{thm:vm}(c)), $\alg^{\delta_\omega}(G_2(m))$ is an axial algebra whose axis set is the full star of blocks at $\omega$. All these axes obey the same fusion law, a sign-doubled analogue of the classical Jordan-type law $\{1,0,\tfrac12\}$ on the enlarged eigenvalue set
$\{1,-1,\tfrac12,-\tfrac12,0\}$, independent of $m$.

Finally, a word of caution is in order with regards to extensions of these results. The distinguished role played by Pasch configurations in the present theory is not a generic feature of partial Steiner triple systems. Rather, it reflects a special property of the Hsiang and Grassmannian geometries. In general, the spectral analysis of a block idempotent is governed by the local incidence geometry of the block.
For these two families the local incidence geometry is completely encoded by Pasch configurations, which are therefore the natural carriers of the local spectral information. In other classes of Steiner systems, such as the Fano plane or the projective geometries $PG(n,2)$, the corresponding local incidence structure is controlled by specific configurations more complicated than Pasch configurations; consequently, Pasch geometry alone is generally insufficient to describe the full spectral behavior of the associated incidence algebras. A systematic study will be pursued elsewhere.

\medskip
The paper is organized as follows. Section~\ref{sec:not} fixes notations and records basic facts on Grassmannian geometries and their decorated incidence algebras. Section~\ref{sec:proof} is devoted to the spectral analysis of block idempotents: we establish the block decomposition of $G_2(m)$ induced by a fixed block, compute the associated Peirce decomposition, and prove Theorem~\ref{thm:localpaschspectrum}. Section~\ref{sec:matrixmodel} realizes the incidence algebra of $G_2(m)$, and the star-decorated algebras of $G_2(m)$, as concrete algebras of symmetric matrices, and records the resulting second self-similarity $\alg^{\delta_\omega}(G_2(m))\cong\ualg\oplus\alg^{\mathbf1}(G_2(m-1))$. Section~\ref{sec:axial} shows that block idempotents are axes precisely for the maximally negative blocks, that the star decorations realize this on the full star of blocks at a point, and computes the resulting fusion law, exhibiting $\alg^{\delta_\omega}(G_2(m))$ as an axial algebra. In Section~\ref{sec:dist} we turn to the global distribution of local spectral types, introduce the spectral profile, identify it with the degree distribution of the negative-Pasch hypergraph, and derive the hierarchy of higher-moment and overlap invariants together with the corresponding binomial inversion formula. Section~\ref{sec:exa} contains explicit computations for the smallest Grassmannian geometries. The last section discusses gauge equivalence of decorations, presents enumerative results on realizable spectral profiles, and formulates several open questions.

\section{Spectral analysis of block idempotents}

\subsection{Preliminaries}\label{sec:not}

For a finite set $X$, $\binom{X}{k}$ denotes the set of $k$-element subsets of $X$. Fix an integer $m\ge2$. Throughout the paper $\Omega:=\{1,\dots,m\}$.
The Grassmannian geometry $G_2(m)$ is the partial Steiner triple system whose point set is
$\mathcal P=\binom{\Omega}{2}$,
and whose set of blocks $\mathcal B$ comprises the triples of pairwise collinear points
$b_{ijk}:=\{ij,ik,jk\}$, which are parametrized by $3$-element subset $\{i,j,k\}\in\binom{\Omega}{3}$. Their cardinalities are $|\mathcal P|=\binom{m}{2}$ and $|\mathcal B|=\binom{m}{3}$.
Similarly, quadrangles (Pasch configurations) of $G_2(m)$ are parametrized by $\binom{\Omega}{4}$. We shall routinely identify the point $\{i,j\}\in\binom{\Omega}{2}$ with the symbol $ij$.

The \emph{support} of the block $b\in\mathcal B$,
\begin{equation}\label{eq:suppdef}
\supp(b):=\bigcup_{p\in b}p\ \subset\ \Omega,
\end{equation}
is the triple of the elements of $\Omega$ contained in the points of $b$, so that $\supp(b_{ijk}) = \{i,j,k\}$. The map $b\mapsto\supp(b)$ is a bijection between $\mathcal B$ and $\binom{\Omega}{3}$, with inverse $\{i,j,k\}\mapsto b_{ijk} = \{ij,ik,jk\}$.

Next we describe the decorated incidence algebras associated with the combinatorial geometry $G_2(m)$. Given a \textit{decoration} $\delta:\mathcal B\to\{\pm1\}$,
the \textit{incidence algebra} $\alg^\delta(G_2(m))$ is the vector space
$\valg=\bigoplus_{p\in\mathcal P}\fie e_p$ with basis
$\{e_p:p\in\mathcal P\}$ equipped with the commutative multiplication given by extending bilinearly the products
\begin{equation}\label{multiplication}
e_{ij}\circ_\delta e_{ik}
=
\delta(\{ij,ik,jk\})\,e_{jk},
\qquad
\{i,j,k\}\in\binom{\Omega}{3},
\end{equation}
together with
\begin{align}\label{multiplication2}
&e_p\circ_\delta e_q=0,& &p, q \in \mathcal P \text{ not collinear.}
\end{align}
In particular, $e_p\circ_\delta e_p = 0$ so the generators $e_p$ are $2$-nilpotents.


The following definition from \cite{Fox-Tka2026b} plays a central role in the spectral theory of decorated partial Steiner triple systems.

\begin{definition}[\cite{Fox-Tka2026b}]\label{def:paschsign}
Let $\Pi=(\mathcal P,\mathcal B)$ be a decorated partial Steiner triple system with decoration
$\delta:\mathcal B\to\{\pm1\}.$
If $P=\{b_1,b_2,b_3,b_4\}\subseteq\mathcal B$
is a Pasch configuration, then its \emph{$\delta$-sign} is defined by
\begin{equation}\label{kappaP}
\kappa_\delta(P)
=
\prod_{b\in P}\delta(b)
=
\delta(b_1)\delta(b_2)\delta(b_3)\delta(b_4).
\end{equation}
\end{definition}

In the Grassmannian geometry $G_2(m)$, a Pasch configuration is uniquely determined by a $4$-subset
$\rho = \{i,j,k,r\}\in\binom{\Omega}{4}$. Precisely, $\rho$ determines the Pasch configuration
$$
P_\rho
=
\{
b_{ijk},
b_{ijr},
b_{ikr},
b_{jkr}
\}.
$$
Accordingly, it is convenient to write
\begin{equation}\label{kappaijkr}
\kappa_{ijkr}
:=
\kappa_\delta(P_\rho)
=
\delta(b_{ijk})
\delta(b_{ijr})
\delta(b_{ikr})
\delta(b_{jkr}).
\end{equation}

\subsection{A block decomposition of $G_2(m)$}\label{sec:proof}
Fix a block $b\in\mathcal B$, write $\{i,j,k\}:=\supp(b)$ so that $b=\{ij,ik,jk\}$, and define the cardinality $m-3$ subset $R:=\Omega\setminus\{i,j,k\}$. The underlying point set of $G_2(m)$ decomposes naturally as disjoint union
\begin{equation}\label{Pdecomp}
\mathcal{P}
=
\mathcal{P}_0
\sqcup
\mathcal{P}_1
\sqcup
\mathcal{P}_2,
\end{equation}
of the subsets
\begin{align*}
&\mathcal{P}_0=
b
=
\{ij,ik,jk\},&
&\mathcal{P}_1=
\bigsqcup_{r\in R}
\{ir,jr,kr\},
&&
\mathcal{P}_2=
\{rs:r,s\in R,\ r\ne s\},
\end{align*}
having cardinalities
\begin{align*}
&|\mathcal{P}_0|=3,&&
|\mathcal{P}_1|=3(m-3),&&
|\mathcal{P}_2|=\binom{m-3}{2}.
\end{align*}
For each $r\in R$, define
$$
W_r:=\{ir,jr,kr\},
\qquad
r\in R.
$$
As $\mathcal{P}_2 =
\binom{R}{2}$ consists of all points completely outside the fixed block, it is identified with the point set of the Grassmannian
$G_2(m-3)$ based on the $(m-3)$-set $R$. Thus the decomposition \eqref{Pdecomp} can be rewritten slightly imprecisely as
$$
\mathcal{P}
=
b
\sqcup
\Bigl(
\bigsqcup_{r\in R}W_r
\Bigr)
\sqcup
G_2(m-3).
$$
Passing from geometry to algebra there results a corresponding decomposition
\begin{equation}\label{Vdecomp}
\valg
=
\valg_{\mathcal{P}_0}
\oplus
\Bigl(
\bigoplus_{r\in R}\valg_{W_r}
\Bigr)
\oplus
\valg_{\mathcal{P}_2},
\end{equation}
where
$\valg_S
:=
\fie\{ e_p:\ p\in S\}$
denotes the linear span of $S$.

\subsection{A cohomological complex and a canonical $\mathbb Z_2$-grading}\label{sec:d-complex}

Both gauge transformations and Pasch-sign patterns admit a natural cohomological
interpretation. For Grassmannian geometries the relevant simplicial
complex is the full simplex on the vertex set $\Omega$: its
$1$-simplices are the points of $G_2(m)$, its $2$-simplices are the
blocks, and its $3$-simplices are the Pasch configurations.

Identifying $\{\pm1\}$ with $\mathbb F_2$, vertex signs
$\eta:\Omega\to\{\pm1\}$, point sign changes $\varepsilon$,
decorations $\delta$, and Pasch-sign patterns $\kappa_\delta$
can be regarded as $0$-, $1$-, $2$-, and $3$-cochains, respectively. The corresponding
coboundary operators
\begin{equation}\label{ddef2}
C^0\stackrel{d_0}{\longrightarrow}
C^1\stackrel{d_1}{\longrightarrow}
C^2\stackrel{d_2}{\longrightarrow}
C^3 .
\end{equation}
are given by
\begin{align}\label{d0d1d2}
\begin{aligned}
(d_0\eta)(ij)
&=
\eta(i)\eta(j),\\
(d_1\varepsilon)(ijk)
&=
\varepsilon(ij)\varepsilon(jk)\varepsilon(ki),\\
(d_2\delta)(ijkl)
&=
\delta(ijk)\delta(ijl)\delta(ikl)\delta(jkl),
\end{aligned}
\end{align}
and coincide with the assignments
\begin{equation}\label{ddef1}
\eta\stackrel{d_0}{\longmapsto}\varepsilon,
\qquad
\varepsilon\stackrel{d_1}{\longmapsto}\delta^\varepsilon/\delta,
\qquad
\delta\stackrel{d_2}{\longmapsto}\kappa_\delta.
\end{equation}
Indeed, the three points of the block $\{i,j,k\}\in\binom{\Omega}{3}$ are
$ij$, $jk$ and $ki$, so that
$$
\delta^\varepsilon(\{i,j,k\})
=
\varepsilon(ij)\varepsilon(jk)\varepsilon(ki)\,
\delta(\{i,j,k\})
=
(d_1\varepsilon)(\{i,j,k\})\,\delta(\{i,j,k\}),
$$
that is, the gauge action is translation of the decoration space by the
subgroup $\operatorname{im}d_1$, while the Pasch-sign pattern is obtained
from the decoration by the next coboundary operator,
$\kappa_\delta=d_2\delta$. In particular, the gauge invariance of the
Pasch signs recorded above is nothing but the identity
\begin{equation}\label{d2d1eq}
d_2\circ d_1=0.
\end{equation}
Since the full simplex is contractible, this complex is exact in positive
degrees. Consequently,
\begin{equation}\label{rankd2}
\ker d_2=\operatorname{im}d_1,
\end{equation}
so that two decorations of $G_2(m)$ have the same Pasch-sign pattern if
and only if they are gauge equivalent; in particular, the realizable
Pasch-sign patterns are in bijection with the gauge-equivalence classes.
Furthermore,
$$
\ker d_1=\operatorname{im}d_0,
$$
so the gauge kernel consists exactly of the sign changes
$$
\varepsilon(ij)=\eta(i)\eta(j),
\qquad
i,j\in\Omega,
$$
and is therefore isomorphic to $\mathbb F_2^{m-1}$. It follows that
\begin{equation}\label{rankd1}
\operatorname{rank}d_1
=
\binom{m}{2}-(m-1)
=
\binom{m-1}{2}.
\end{equation}

\begin{lemma}\label{lem:gradingauto}
For a subset $\tau\subseteq\Omega$ define
$\varepsilon_\tau:\mathcal P\to\{\pm1\}$ by
\begin{equation}\label{epstau}
\varepsilon_\tau(p)=(-1)^{|p\cap\tau|},
\qquad
p\in\mathcal P,
\end{equation}
that is, $\varepsilon_\tau=d_0(\mathbf 1_\tau)$, where
$\mathbf 1_\tau\in C^0$ is the indicator function of $\tau$. Then, for
\emph{every} decoration $\delta$, the diagonal map
\begin{equation}\label{thetatau}
\theta_\tau:\ \alg^\delta(G_2(m))\to\alg^\delta(G_2(m)),
\qquad
\theta_\tau(e_p)=\varepsilon_\tau(p)\,e_p ,
\end{equation}
is an involutive automorphism, and the induced decomposition
\begin{equation}\label{taugrading}
\begin{aligned}
\alg^\delta&=\alg_{+}(\tau)\oplus\alg_{-}(\tau),\\
\alg_{\pm}(\tau)&=\valg_{\mathcal P^{\pm}(\tau)},\\
\mathcal P^{\pm}(\tau)&=\{p\in\mathcal P:\ (-1)^{|p\cap\tau|}=\pm1\},
\end{aligned}
\end{equation}
is a $\mathbb Z_2$-grading of $\alg^\delta$.
\end{lemma}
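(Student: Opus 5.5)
The plan is to verify directly on the basis that $\theta_\tau$ preserves the multiplication; involutivity, linearity and bijectivity are immediate because $\theta_\tau$ is diagonal with entries $\pm1$, so $\theta_\tau^2=\Id$. By bilinearity and commutativity it suffices to check $\theta_\tau(e_p\circ_\delta e_q)=\theta_\tau(e_p)\circ_\delta\theta_\tau(e_q)$ for basis elements. When $p,q$ are not collinear, or $p=q$, both sides vanish by \eqref{multiplication2}. It therefore remains to treat $p=ij$ and $q=ik$ with $\{i,j,k\}\in\binom{\Omega}{3}$, where by \eqref{multiplication} the identity becomes
$$
\delta(b_{ijk})\,\varepsilon_\tau(jk)\,e_{jk}
=
\varepsilon_\tau(ij)\,\varepsilon_\tau(ik)\,\delta(b_{ijk})\,e_{jk}.
$$
So the whole statement reduces to the scalar identity $\varepsilon_\tau(ij)\varepsilon_\tau(ik)\varepsilon_\tau(jk)=1$, which says precisely that $d_1\varepsilon_\tau\equiv1$.

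This identity follows from the cochain description: $\varepsilon_\tau=d_0(\mathbf 1_\tau)$, and $d_1\circ d_0=0$, the analogue of \eqref{d2d1eq} one degree lower. Concretely, $\varepsilon_\tau(ij)=\eta(i)\eta(j)$ with $\eta(x)=(-1)^{[x\in\tau]}$, so the product over the three edges of the triangle is $\eta(i)^2\eta(j)^2\eta(k)^2=1$. In gauge language, $\delta^{\varepsilon_\tau}=\delta$, so the rescaling $e_p\mapsto\varepsilon_\tau(p)e_p$ leaves the decoration unchanged and is therefore an automorphism for \emph{every} $\delta$. This explains why the conclusion holds independently of the decoration.

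For the grading, $\alg_\pm(\tau)$ are exactly the $\pm1$-eigenspaces of the diagonal involution $\theta_\tau$, because $\mathcal P=\mathcal P^+(\tau)\sqcup\mathcal P^-(\tau)$ and $\fie$ has characteristic $0$, so $\pm1$ are distinct. Since $\theta_\tau$ is multiplicative, the product of a $\lambda$-eigenvector and a $\mu$-eigenvector is a $\lambda\mu$-eigenvector. Hence $\alg_a\circ_\delta\alg_b\subseteq\alg_{ab}$ for $a,b\in\{\pm1\}$, which is the $\mathbb Z_2$-grading \eqref{taugrading}. There is no real obstacle; the only step needing care is the reduction to the triangle identity, in particular noting that the case $p=q$ and the non-collinear cases contribute nothing.
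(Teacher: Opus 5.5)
Your proposal is correct and follows essentially the same route as the paper: check multiplicativity on basis vectors, reduce the collinear case to $\varepsilon_\tau(p)\varepsilon_\tau(q)\varepsilon_\tau(r)=1$, i.e.\ $d_1\varepsilon_\tau\equiv1$, and deduce this from $d_1\circ d_0=0$. You then read off the grading from the $\pm1$-eigenspaces of the involution $\theta_\tau$, exactly as the paper does.
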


\begin{proof}
Clearly $\theta_\tau^2=\mathrm{id}$. That $\theta_{\tau}$ is a homomorphism need only be tested
on basis vectors. If $p,q$ are not collinear then
$e_p\circ_\delta e_q=0$, so
$$\theta_\tau(e_p)\circ_\delta\theta_\tau(e_q)
= \epsilon_{\tau}(p)\epsilon_{\tau}(q)e_{p}\circ e_{q} = 0 = \theta_{\tau}(e_{p}\circ_{\delta}e_{q}).$$
If $p,q,r$ are the three points of a block $b'$,
then $e_p\circ_\delta e_q=\delta(b')e_r$, so that
$$
\theta_\tau(e_p)\circ_\delta\theta_\tau(e_q)
=\varepsilon_\tau(p)\varepsilon_\tau(q)\,\delta(b')e_r,
\qquad
\theta_\tau(e_p\circ_\delta e_q)=\varepsilon_\tau(r)\,\delta(b')e_r .
$$
These agree for every block $b'$ if and only if
$\varepsilon_\tau(p)\varepsilon_\tau(q)\varepsilon_\tau(r)=1$, that is,
if and only if $d_1\varepsilon_\tau\equiv1$, which holds because
$d_1\varepsilon_\tau=d_1d_0(\mathbf 1_\tau)=0$. This proves that
$\theta_\tau$ is an automorphism, and \eqref{taugrading} is its
eigenspace decomposition by \eqref{epstau}.
\end{proof}

Specializing $\tau=\supp{b}$ yields, for every block $b$, a canonical grading automorphism of the incidence
algebra independently of the decoration $\delta$.

\begin{corollary}[A canonical $\mathbb Z_2$-grading]
\label{cor:gradingauto}
Let $b\in\mathcal B$ and put
$\varepsilon_b:=\varepsilon_{\supp(b)}$, $\theta_b:=\theta_{\supp(b)}$,
so that
\begin{equation}\label{epsilonmap}
\varepsilon_b(p)=(-1)^{|p\,\cap\,\supp(b)|},
\qquad
\theta_b(e_p)=\varepsilon_b(p)\,e_p .
\end{equation}
Then, for every decoration $\delta$, the map $\theta_b$ is an involutive
automorphism of $\alg^\delta(G_2(m))$, and its eigenspace decomposition
is
\begin{align}\label{apriorigrading}
\begin{aligned}
&\alg^\delta=\alg_{+}(b)\oplus\alg_{-}(b),\\
&\alg_{+}(b)=\valg_{\mathcal P_0}\oplus\valg_{\mathcal P_2},&&
&\alg_{-}(b)=\valg_{\mathcal P_1}=\bigoplus_{r\in R}\valg_{W_r} .
\end{aligned}
\end{align}
\end{corollary}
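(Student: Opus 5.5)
The plan is to obtain the corollary as the special case $\tau=\supp(b)$ of Lemma~\ref{lem:gradingauto}, and then to identify the two eigenspaces with the pieces of the block decomposition \eqref{Pdecomp}.

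First, apply Lemma~\ref{lem:gradingauto} with $\tau=\{i,j,k\}=\supp(b)$. The lemma holds for every decoration $\delta$ and every subset $\tau\subseteq\Omega$. It gives directly that $\theta_b=\theta_{\supp(b)}$ is an involutive automorphism of $\alg^\delta(G_2(m))$. It also gives that $\alg^\delta=\alg_+(b)\oplus\alg_-(b)$ is a $\mathbb Z_2$-grading, with $\alg_\pm(b)=\valg_{\mathcal P^\pm(\supp(b))}$. The definitions \eqref{epsilonmap} are exactly \eqref{epstau} and \eqref{thetatau} for this choice of $\tau$, so nothing needs to be rechecked.

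Second, compute $|p\cap\supp(b)|$ for each point $p$, sorting by the three pieces of \eqref{Pdecomp}:
\begin{itemize}
\item If $p\in\mathcal P_0=b=\{ij,ik,jk\}$, then $p\subset\{i,j,k\}$, so $|p\cap\supp(b)|=2$ and $\varepsilon_b(p)=+1$.
\item If $p\in W_r=\{ir,jr,kr\}$ with $r\in R$, then $p$ has exactly one element in $\{i,j,k\}$, so $\varepsilon_b(p)=-1$.
\item If $p=rs\in\mathcal P_2$ with $r,s\in R$, then $|p\cap\supp(b)|=0$ and $\varepsilon_b(p)=+1$.
\end{itemize}

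Since \eqref{Pdecomp} is a disjoint decomposition of $\mathcal P$, these cases give $\mathcal P^+(\supp(b))=\mathcal P_0\sqcup\mathcal P_2$ and $\mathcal P^-(\supp(b))=\mathcal P_1=\bigsqcup_{r\in R}W_r$. Using $\valg_{S\sqcup S'}=\valg_S\oplus\valg_{S'}$, this yields \eqref{apriorigrading}.

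No step is a real obstacle. The only point needing care is that the sets $W_r$ exhaust $\mathcal P_1$ and are pairwise disjoint. This is immediate because each $p\in\mathcal P_1$ contains exactly one element $r$ of $R$, and that element determines $W_r$ uniquely.
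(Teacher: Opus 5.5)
Your proposal is correct and matches the paper's proof: specialize Lemma~\ref{lem:gradingauto} to $\tau=\supp(b)$, note that $|p\cap\supp(b)|$ equals $2$, $1$, $0$ on $\mathcal P_0$, $\mathcal P_1$, $\mathcal P_2$ respectively, and read off $\mathcal P^{+}=\mathcal P_0\sqcup\mathcal P_2$ and $\mathcal P^{-}=\mathcal P_1$. Your added remark that the $W_r$ partition $\mathcal P_1$ is a harmless extra detail the paper leaves implicit.
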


\begin{proof}
Apply Lemma~\ref{lem:gradingauto} with $\tau=\supp(b)$. By
\eqref{Pdecomp}, $|p\cap\supp(b)|$ equals $2$ for $p\in\mathcal P_0$,
$1$ for $p\in\mathcal P_1$ and $0$ for $p\in\mathcal P_2$; hence
$\mathcal P^{+}(\tau)=\mathcal P_0\sqcup\mathcal P_2$ and
$\mathcal P^{-}(\tau)=\mathcal P_1$, which is \eqref{apriorigrading}.
\end{proof}

\subsection{The Peirce decomposition}
The decomposition \eqref{Vdecomp} is  a consequence of the fact that the multiplication in $\alg^\delta(G_2(m))$ is compatible with the natural inclusions of smaller Grassmannian geometries. The following lemma makes this observation precise.

\begin{lemma}\label{lem:closure}
For $T\subseteq\Omega=\{1,\dots,m\}$ the subspace
$\valg(T):=\fie\{e_{ij}: i,j\in T\}$ is a subalgebra of $\alg^\delta(G_2(m))$ having dimension $
\dim \valg(T)=\binom{|T|}{2}$, and such that if $\delta_T$ denotes the restriction of $\delta$ to the blocks contained in $T$, then $\valg(T)$ is isomorphic with $\alg^{\delta_T}(G_2(|T|))$,
$\valg(T)\simeq\alg^{\delta_T}(G_2(|T|))$.
\end{lemma}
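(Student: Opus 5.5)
The plan is to check closure directly on basis vectors, then build the isomorphism by relabelling. Here $\valg(T)$ is the span of $\{e_p : p\in\binom{T}{2}\}$. These basis vectors are a subset of the basis of $\valg$, so they are linearly independent. This gives $\dim\valg(T)=\binom{|T|}{2}$ immediately.

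For closure under $\circ_\delta$, it suffices by bilinearity to check products $e_p\circ_\delta e_q$ with $p,q\in\binom{T}{2}$. By \eqref{multiplication2}, if $p$ and $q$ are not collinear the product is $0\in\valg(T)$. This covers $p=q$ and disjoint pairs. If $p\neq q$ are collinear, then in $G_2(m)$ they share exactly one element of $\Omega$: $p=ij$, $q=ik$ with $\{i,j,k\}\in\binom{\Omega}{3}$. Since $p,q\subseteq T$, we get $\{i,j,k\}\subseteq T$. By \eqref{multiplication}, $e_{ij}\circ_\delta e_{ik}=\delta(b_{ijk})e_{jk}$, and $jk\in\binom{T}{2}$, so the product lies in $\valg(T)$. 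Hence $\valg(T)$ is a subalgebra.

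For the isomorphism, choose a bijection $\phi:T\to\{1,\dots,|T|\}$. It induces bijections $\binom{T}{2}\to\binom{\{1,\dots,|T|\}}{2}$ on points and $\binom{T}{3}\to\binom{\{1,\dots,|T|\}}{3}$ on blocks. Transport $\delta_T$ along $\phi$, so that a block $b_{\phi(i)\phi(j)\phi(k)}$ of $G_2(|T|)$ receives the sign $\delta(b_{ijk})$. Define the linear map $\Phi(e_{ij})=e'_{\phi(i)\phi(j)}$. By the previous paragraph, the blocks of $G_2(m)$ that meet the multiplication table of $\valg(T)$ are exactly the $b_{ijk}$ with $\{i,j,k\}\subseteq T$, which are the blocks on which $\delta_T$ is defined. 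Collinearity and the value of $i\wedge j$ are preserved by $\phi$. Comparing \eqref{multiplication} and \eqref{multiplication2} on both sides then shows that $\Phi$ is multiplicative on basis pairs. Since $\Phi$ is a bijection on bases, it is an algebra isomorphism.

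There is no real obstacle here. The only point requiring care is that closure uses the fact that the third point $i\wedge j$ of a block determined by two points of $\binom{T}{2}$ again lies in $\binom{T}{2}$. This holds because the block's support is the union of the two points.
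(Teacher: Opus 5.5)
Your proof is correct and follows the paper's argument. The closure check is the same: non-collinear pairs give $0$, and $e_{ij}\circ_\delta e_{ik}=\delta(b_{ijk})e_{jk}$ with $j,k\in T$. You also correctly fill in what the paper calls immediate, namely the dimension count and the relabelling isomorphism induced by a bijection $T\to\{1,\dots,|T|\}$.
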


\begin{proof}
If $p,q\in\mathcal P(T)$ are not collinear, then $e_p\circ_\delta e_q=0$. If $p=ij$ and $q=ik$, then \[ e_{ij}\circ_\delta e_{ik} = \delta(b_{ijk})e_{jk}, \] and since $j,k\in T$, the product again belongs to $\valg(T)$. Hence $\valg(T)$ is a subalgebra. The remaining assertions are immediate.
\end{proof}

\begin{corollary}\label{cor:where} Let $S\subseteq\mathcal{P}$ and let $$ T:=\bigcup_{ij\in S}\{i,j\}. $$ Then the subalgebra generated by $\{e_p:p\in S\}$ is contained in $\valg(T).$ \end{corollary}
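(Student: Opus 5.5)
The plan is to combine Lemma~\ref{lem:closure}, which shows that $\valg(T)$ is a subalgebra, with the elementary fact that a subalgebra containing a generating set contains the subalgebra it generates.

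First I check that every generator lies in $\valg(T)$. For each $p=ij\in S$ we have $i,j\in T$ by the definition of $T$, so $e_p\in\valg(T)$. Hence $\{e_p:p\in S\}\subseteq\valg(T)$.

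Next, Lemma~\ref{lem:closure} applied to this $T\subseteq\Omega$ says that $\valg(T)$ is closed under $\circ_\delta$ and is a linear subspace. The subalgebra generated by $\{e_p:p\in S\}$ is the smallest subspace containing these elements and closed under the product. Equivalently, it is the span of all iterated products of the generators, in every bracketing.

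To make the containment explicit, I would induct on the length of the iterated products. Length-one products are the generators, which lie in $\valg(T)$. If $x,y\in\valg(T)$, then $x\circ_\delta y\in\valg(T)$ by Lemma~\ref{lem:closure}, and linear combinations stay in $\valg(T)$ because it is a subspace. Therefore the whole generated subalgebra is contained in $\valg(T)$.

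No step here poses a real obstacle. The only point needing care is that $T$ is exactly the union of the supports of the points of $S$, which is what guarantees that each generator lies in $\valg(T)$. The closure property that does the real work was already verified in Lemma~\ref{lem:closure}: the product $e_{ij}\circ_\delta e_{ik}=\delta(b_{ijk})e_{jk}$ only involves indices already present among $i,j,k$.
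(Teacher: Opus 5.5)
Your proof is correct and is the argument the paper intends. The paper gives no separate proof and treats the corollary as immediate from Lemma~\ref{lem:closure}: each generator $e_p$, $p\in S$, lies in $\valg(T)$, and $\valg(T)$ is a subalgebra, so it contains the subalgebra they generate.
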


The decomposition \eqref{Vdecomp} is adapted to the geometry of the fixed block and serves as the starting point for the spectral analysis. Proposition \ref{prop:blockdecomp} shows that it is preserved by multiplication with the block idempotent.

\begin{proposition}\label{prop:blockdecomp}
For any decoration $\delta:\mathcal{B}\to\{\pm1\}$ and any block $b\in\mathcal B$, the decomposition \eqref{Vdecomp} is invariant under $L_\delta(c_b^\delta)$. Moreover, with respect to \eqref{Vdecomp}, the operator $L_\delta(c_b^\delta)$ is block diagonal,
\begin{equation}\label{blockdiag}
L_\delta(c_b^\delta)
=
L_0
\oplus
\Bigl(
\bigoplus_{r\in R}L_r
\Bigr)
\oplus
0_{\mathcal{P}_2}.
\end{equation}
Here $L_0$ is the restriction of $L_\delta(c_b^\delta)$ to $\valg_{\mathcal{P}_0}$, $L_r$ is its restriction to $\valg_{W_r}$, and $0_{\mathcal{P}_2}$ denotes the zero operator on $\valg_{\mathcal{P}_2}$.
\end{proposition}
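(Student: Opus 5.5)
Since $L_\delta(c_b^\delta)$ is linear, it suffices to compute $c_b^\delta\circ_\delta e_p$ for each basis vector $e_p$ and to check that the result lies in the same summand of \eqref{Vdecomp} as $e_p$. Write $c_b^\delta=\tfrac12\delta(b)(e_{ij}+e_{ik}+e_{jk})$. By \eqref{multiplication} and \eqref{multiplication2}, $e_q\circ_\delta e_p$ is nonzero only when $q$ and $p$ are distinct collinear points, that is, when they share exactly one element of $\Omega$. In that case the product is $\pm e_{q\wedge p}$, where $q\wedge p$ is the symmetric difference $q\triangle p$. So the whole argument reduces to tracking symmetric differences of $2$-subsets, and I treat the three pieces of \eqref{Pdecomp} in turn.

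First take $p=rs\in\mathcal P_2$ with $r,s\in R$. Then $p$ is disjoint from $\{i,j,k\}$, so $p$ shares no element with any of $ij,ik,jk$. Hence $c_b^\delta\circ_\delta e_{rs}=0$, which gives the zero block $0_{\mathcal P_2}$.

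Next take $p=ir\in W_r$. Among the points of $b$, only $ij$ and $ik$ meet $ir$, and they give $e_{ij}\circ_\delta e_{ir}=\delta(b_{ijr})e_{jr}$ and $e_{ik}\circ_\delta e_{ir}=\delta(b_{ikr})e_{kr}$. The point $jk$ is disjoint from $ir$. Therefore
$$c_b^\delta\circ_\delta e_{ir}=\tfrac12\delta(b)\bigl(\delta(b_{ijr})e_{jr}+\delta(b_{ikr})e_{kr}\bigr)\in\valg_{W_r},$$
and the same holds for $jr$ and $kr$ by symmetry. So each $\valg_{W_r}$ is invariant. Alternatively, one can note that this product lies in $\valg(\{i,j,k,r\})$ by Corollary~\ref{cor:where}, and in $\alg_-(b)$ by Corollary~\ref{cor:gradingauto}, since $c_b^\delta\in\alg_+(b)$. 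The intersection of these two subspaces is exactly $\valg_{W_r}$, which confirms the computation.

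Finally take $p\in\mathcal P_0=b$. Here $c_b^\delta\circ_\delta e_p$ is a combination of products of points of $b$. By Lemma~\ref{lem:closure} with $T=\{i,j,k\}$, it lies in $\valg(\{i,j,k\})=\valg_{\mathcal P_0}$. Explicitly, $c_b^\delta\circ_\delta e_{ij}=\tfrac12(e_{jk}+e_{ik})$, because $\delta(b)^2=1$; we will need this for the later spectral computation.

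Combining the three cases, each summand of \eqref{Vdecomp} is invariant, so the operator is block diagonal as in \eqref{blockdiag}. There is no real obstacle here; the only step needing care is the case check for $W_r$, namely confirming that products never leave the triangle $\{ir,jr,kr\}$ for the fixed $r$ and never mix different values of $r$. That follows because the symmetric difference retains $r$ and lands in $\{j,k\}\times\{r\}$.
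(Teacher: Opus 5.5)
Your proof is correct and follows essentially the same route as the paper. You get invariance of $\valg_{\mathcal P_0}$ from the closure lemma with $T=\{i,j,k\}$, compute $c_b^\delta\circ_\delta e_{ir}$ explicitly to get invariance of each $\valg_{W_r}$, and note that points of $\mathcal P_2$ are not collinear with any point of $b$. Your extra cross-check for the $W_r$ case, intersecting $\valg(\{i,j,k,r\})$ from Corollary~\ref{cor:where} with $\alg_-(b)$ from Corollary~\ref{cor:gradingauto}, is also valid and not circular, since the grading is established before this proposition.
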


\begin{proof}
Write $c=c_b^\delta.$ Since $c$ is supported on
$$
\mathcal{P}_0=b=\{ij,ik,jk\},\quad \{i,j,k\}=\supp(b),
$$
it suffices to determine the action of $c$ on the three summands of \eqref{Vdecomp}.
Since $\mathcal{P}_0$ is a block, by Corollary~\ref{cor:where}, $\valg_{\mathcal{P}_0}$ is a subalgebra containing $c$, so
\begin{equation}\label{B0inv}
L_\delta(c)(\valg_{\mathcal{P}_0})
\subseteq
\valg_{\mathcal{P}_0}.
\end{equation}
For $W_r=\{ir,jr,kr\}$, $r\in R$, there hold
\begin{align}\label{wr3}
&e_{ir}\circ_\delta e_{ij}=\delta(b_{ijr}) e_{jr},&
&e_{ir}\circ_\delta e_{ik}=\delta(b_{ikr}) e_{kr},&
&e_{ir}\circ_\delta e_{jk}=0,
\end{align}
and similar identities hold after cyclic permutation of $i,j,k$. Therefore
\begin{equation}\label{wrinv}
L_\delta(c)(\valg_{W_r})
\subseteq
\valg_{W_r}
\end{equation}
for every $r\in R$. Finally, if $rs\in\mathcal{P}_2$, then $rs$ does not belong to any block containing a point of $\mathcal{P}_0$, so
\begin{align}\label{B2a}
&e_{rs}\circ_\delta e_{ij}=0,
&&
e_{rs}\circ_\delta e_{ik}=0,
&&
e_{rs}\circ_\delta e_{jk}=0,
\end{align}
and consequently
\begin{equation}\label{B2zero}
L_\delta(c)(\valg_{\mathcal{P}_2})=0.
\end{equation}
Combining \eqref{B0inv}, \eqref{wrinv}, and \eqref{B2zero} yields the block decomposition \eqref{blockdiag}.
\end{proof}

The block decomposition reduces the spectral problem to the computation of the individual diagonal blocks. We now determine these blocks explicitly.

\begin{proposition}\label{prop:blockmatrices}
Let $b\in\mathcal B$, write $\{i,j,k\}:=\supp(b)$, and let $c=c_b^\delta$. The matrix of $L_0$ with respect to the basis $\{e_{ij},e_{ik},e_{jk}\}$ is
\begin{equation}\label{L0matrix}
L_0
=
\tfrac12
\begin{pmatrix}
0&1&1\\
1&0&1\\
1&1&0
\end{pmatrix},
\end{equation}
and therefore
\begin{equation}\label{specL0}
\Spec(L_0)
=
\Bigl\{
1,
\Bigl(-\tfrac12\Bigr)^2
\Bigr\}.
\end{equation}
Furthermore, for every $r\in R$, the characteristic polynomial of $L_r$ is
\begin{equation}\label{charpolyr}
\chi_r(\lambda)
=
\lambda^3-\tfrac34\lambda-\tfrac14\kappa_{ijkr}.
\end{equation}
Consequently,
\begin{equation}\label{specplus}
\Spec(L_r)
=
\left\{
  \begin{array}{ll}
    \bigl\{1,\bigl(-\tfrac12\bigr)^2\bigr\}, & \hbox{if $\kappa_{ijkr}=1$;} \\
    \bigl\{-1,\bigl(\tfrac12\bigr)^2\bigr\}, & \hbox{if $\kappa_{ijkr}=-1$.}
  \end{array}
\right.
\end{equation}
In particular,
\begin{equation}\label{fiveeigen}
\spec\bigl(L_\delta(c_b^\delta)\bigr)
\subseteq
\Bigl\{
0,\pm1,\pm\tfrac12
\Bigr\}.
\end{equation}
\end{proposition}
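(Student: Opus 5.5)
The plan is a direct computation of the diagonal blocks from Proposition~\ref{prop:blockdecomp}, using only the multiplication rule \eqref{multiplication} and $\delta^2=1$. Write $c=c_b^\delta=\tfrac12\delta(b)(e_{ij}+e_{ik}+e_{jk})$.

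\emph{The block $L_0$.} Since $e_p\circ_\delta e_p=0$ and $e_{ij}\circ_\delta e_{ik}=\delta(b)e_{jk}$, we get
$$c\circ_\delta e_{ij}=\tfrac12\delta(b)\bigl(\delta(b)e_{jk}+\delta(b)e_{ik}\bigr)=\tfrac12(e_{ik}+e_{jk}).$$
The same holds cyclically for $e_{ik}$ and $e_{jk}$, which gives \eqref{L0matrix}. The matrix $\tfrac12(J-I)$, with $J$ the all-ones $3\times 3$ matrix, has eigenvalue $1$ on $(1,1,1)$ and eigenvalue $-\tfrac12$ on the sum-zero plane. This is \eqref{specL0}.

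\emph{The block $L_r$.} By \eqref{wr3} and its cyclic analogues, in the basis $\{e_{ir},e_{jr},e_{kr}\}$ the operator $L_r$ is symmetric with zero diagonal. Its off-diagonal entries are
$$x=\tfrac12\delta(b)\delta(b_{ijr}),\quad y=\tfrac12\delta(b)\delta(b_{ikr}),\quad z=\tfrac12\delta(b)\delta(b_{jkr}),$$
at the positions $(ir,jr)$, $(ir,kr)$, $(jr,kr)$ respectively. The entries are indeed symmetric: for instance, $c\circ_\delta e_{jr}$ has $e_{ir}$-coefficient $\tfrac12\delta(b)\delta(b_{ijr})$, coming from $e_{ij}\circ_\delta e_{jr}$. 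For such a matrix the characteristic polynomial is $\lambda^3-(x^2+y^2+z^2)\lambda-2xyz$. Here $x^2+y^2+z^2=\tfrac34$, and
$$2xyz=\tfrac14\,\delta(b)^3\delta(b_{ijr})\delta(b_{ikr})\delta(b_{jkr})=\tfrac14\kappa_{ijkr}$$
by \eqref{kappaijkr}, since $\delta(b)^3=\delta(b)$. This gives \eqref{charpolyr}.

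\emph{The spectra of $L_r$.} Factor the characteristic polynomial: $\lambda^3-\tfrac34\lambda-\tfrac14=(\lambda-1)(\lambda+\tfrac12)^2$, and the case $\kappa_{ijkr}=-1$ follows by replacing $\lambda$ with $-\lambda$. This yields \eqref{specplus}. Since $L_r$ is a real symmetric matrix, algebraic multiplicities are the relevant ones and no further diagonalizability issues arise.

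\emph{Conclusion.} Finally, \eqref{fiveeigen} follows from the block decomposition \eqref{blockdiag}. The spectrum of $L_\delta(c_b^\delta)$ is the union of the spectra of $L_0$, of the $L_r$, and of the zero block on $\valg_{\mathcal P_2}$, which contributes the eigenvalue $0$ when $m\ge5$.

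The only delicate step is the sign bookkeeping in the off-diagonal entries of $L_r$: one must check that each entry involves exactly one factor $\delta(b)$ from $c$ and one block sign, so that the product of the three entries produces the full Pasch sign $\kappa_{ijkr}$. I would verify this carefully on one entry and then invoke the cyclic symmetry in $i,j,k$.
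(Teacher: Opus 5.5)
Your proposal is correct and follows the paper's proof: you compute $L_0$ directly, write $L_r$ as a symmetric zero-diagonal matrix, and observe that the constant term $-2xyz$ of its characteristic polynomial $\lambda^3-\tfrac34\lambda-2xyz$ equals $-\tfrac14\kappa_{ijkr}$, because $\delta(b)$ enters the product of the three off-diagonal entries an odd number of times. The only differences are cosmetic: you read off $\Spec(L_0)$ from the eigenvectors of $\tfrac12(J-I)$, and you treat the case $\kappa_{ijkr}=-1$ by the substitution $\lambda\mapsto-\lambda$, where the paper writes the factorization $(\lambda-\kappa_{ijkr})(\lambda+\tfrac12\kappa_{ijkr})^2$ directly.
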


\begin{proof}
By Proposition \ref{prop:blockdecomp}, the operator $L_\delta(c)$ preserves each summand in \eqref{Vdecomp}, acts on $\valg_{\mathcal{P}_2}$ as zero, and decomposes as in \eqref{blockdiag}. It remains only to compute the non-zero blocks. We compute separately the blocks $L_0$ and $L_r$.
First consider the summand $\valg_{\mathcal{P}_0}$, where $\mathcal{P}_0=\{ij,ik,jk\}$.
By definition of the block idempotent $c_b^\delta$, using $e_{ij}\circ_\delta e_{ij}=0$ and $\delta(b)^2=1$, a direct computation gives
$$
L_\delta(c)e_{ij}
=
\tfrac12\delta(b)\bigl(e_{ij}\circ_\delta e_{ij}+e_{ik}\circ_\delta e_{ij}+e_{jk}\circ_\delta e_{ij}\bigr)
=
\tfrac12\bigl(e_{jk}+e_{ik}\bigr),
$$
and similarly for $e_{ik}$ and $e_{jk}$. Hence the restriction of $L_\delta(c)$ to $\valg_{\mathcal{P}_0}$ is represented by the matrix $L_{0}$ in \eqref{L0matrix} which has characteristic polynomial
$$
\chi_0(\lambda)
=
\lambda^3-\tfrac34\lambda-\tfrac14 =
(\lambda-1)\Bigl(\lambda+\tfrac12\Bigr)^2.
$$
This proves \eqref{specL0}.
Now fix $r\in R$ and consider $W_r=\{ir,jr,kr\}$. The corresponding Pasch configuration $
P:=\{b_{ijk},
b_{ijr},
b_{ikr},
b_{jkr}\}
$ has $\delta$-sign $\kappa_{ijkr}$ as in \eqref{kappaijkr}.
In the ordered basis $(e_{ir},e_{jr},e_{kr}),$
the operator $L_r$ is represented by a matrix of the form
$$
L_r
=
\frac12
\begin{pmatrix}
0&s_1&s_2\\
s_1&0&s_3\\
s_2&s_3&0
\end{pmatrix},
$$
where
\begin{align}
s_1&=\delta(b_{ijk})\delta(b_{ijr}),
\quad
s_2=\delta(b_{ijk})\delta(b_{ikr}),
\quad
s_3=\delta(b_{ijk})\delta(b_{jkr}).
\label{eps3}
\end{align}
Since $\delta(b_{ijk})^2=1$, it follows that
\begin{equation}\label{epsproduct}
s_1s_2s_3
=
\delta(b_{ijk})
\delta(b_{ijr})
\delta(b_{ikr})
\delta(b_{jkr})
=
\kappa_{ijkr}.
\end{equation}

Using \eqref{epsproduct}, a  direct determinant calculation yields
$$
\begin{aligned}
\chi_r(\lambda)&=\det(\lambda I-L_r) =\lambda^3-\tfrac34\lambda-\tfrac14 s_1s_2s_3
\\&= \lambda^3-\tfrac34\lambda-\tfrac14\kappa_{ijkr}  = (\lambda-\kappa_{ijkr})\bigl(\lambda+\tfrac12 \kappa_{ijkr}\bigr)^2,
\end{aligned}
$$
which shows \eqref{charpolyr} and \eqref{specplus}.
Finally, by \eqref{blockdiag}, the remaining block is $0_{\mathcal{P}_2}$. Combining \eqref{specL0}, \eqref{specplus} and \eqref{B2zero} yields \eqref{fiveeigen}.
\end{proof}

\subsection{Proof of the main result}\label{sec:localpasch}

Combining Propositions~\ref{prop:blockdecomp} and \ref{prop:blockmatrices}, we obtain the main result of the paper, which expresses the complete multiset spectrum of a block idempotent in terms of the number of negative local Pasch configurations containing the corresponding block.

\begin{theorem}\label{thm:localpaschspectrum}
Let $b\in\mathcal B$, write $\{i,j,k\}:=\supp(b)$, let $R:=\Omega\setminus\{i,j,k\}$, and let
\begin{equation}\label{kjkr}
\kappa_{ijkr}
=
\delta(b)
\delta(b_{ijr})
\delta(b_{ikr})
\delta(b_{jkr}),
\qquad
r\in R.
\end{equation}
Define
\begin{equation}\label{nminusone1}
n^{\delta}_{-1}(b)
=
|\{r\in R:\kappa_{ijkr}=-1\}|,
\end{equation}
which we abbreviate to $n^{\delta}_{-1}$ whenever the block $b$ is
fixed. Then
\begin{equation}\label{nplusrelation}
|\{r\in R:\kappa_{ijkr}=1\}| = m-3-n^{\delta}_{-1}
\end{equation}
and
\begin{equation}\label{localspectrumformula}
\begin{aligned}
\Spec&(L_\delta(c_b^\delta))=\sigma_{n^{\delta}_{-1}}
\end{aligned}
\end{equation}
where
\begin{equation}\label{sigmanot}
\sigma_k
=
\Bigl\{
0^{\binom{m-3}{2}},
1^{m-2-k},
(-1)^k,
(-\tfrac12)^{2(m-2-k)},
(\tfrac12)^{2k}
\Bigr\}
\end{equation}
\end{theorem}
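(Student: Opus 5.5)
The proof is an assembly of Propositions~\ref{prop:blockdecomp} and \ref{prop:blockmatrices}; only bookkeeping of multiplicities remains.

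The identity \eqref{nplusrelation} is immediate. Each $r\in R$ has $\kappa_{ijkr}\in\{\pm1\}$, and $|R|=m-3$. So the $r$ with $\kappa_{ijkr}=1$ are exactly the complement in $R$ of the $n^\delta_{-1}$ indices with $\kappa_{ijkr}=-1$. Note also that \eqref{kjkr} agrees with \eqref{kappaijkr}, because $\delta(b)=\delta(b_{ijk})$.

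For the spectrum, I would use the block diagonal form \eqref{blockdiag} from Proposition~\ref{prop:blockdecomp}. The characteristic polynomial of $L_\delta(c_b^\delta)$ is then the product
\[
\chi_0(\lambda)\prod_{r\in R}\chi_r(\lambda)\cdot\lambda^{\binom{m-3}{2}} .
\]
Hence the multiset spectrum is the multiset union of $\Spec(L_0)$, all the $\Spec(L_r)$, and $0^{\binom{m-3}{2}}$, the last coming from the zero block on $\valg_{\mathcal P_2}$, which has dimension $|\mathcal P_2|=\binom{m-3}{2}$.

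Write $k=n^\delta_{-1}$ and substitute \eqref{specL0} and \eqref{specplus}.
\begin{itemize}
\item $L_0$ contributes $\{1,(-\tfrac12)^2\}$.
\item Each of the $m-3-k$ positive $r$ contributes $\{1,(-\tfrac12)^2\}$.
\item Each of the $k$ negative $r$ contributes $\{-1,(\tfrac12)^2\}$.
\end{itemize}
Summing, the eigenvalue $1$ has multiplicity $1+(m-3-k)=m-2-k$, and $-\tfrac12$ has multiplicity $2(m-2-k)$. The eigenvalue $-1$ has multiplicity $k$, and $\tfrac12$ has multiplicity $2k$. Together with $0^{\binom{m-3}{2}}$, this is exactly $\sigma_k$ in \eqref{sigmanot}.

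As a consistency check, the multiplicities add to
\[
\binom{m-3}{2}+3(m-2)=\binom{m}{2}=\dim\valg .
\]

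There is no real obstacle here. The only point needing care is that algebraic multiplicities of a block diagonal operator add, which holds because the characteristic polynomial factors over the direct sum \eqref{Vdecomp}. The zero eigenvalue does not pick up extra multiplicity from the nonzero blocks, since none of $\pm1,\pm\tfrac12$ equals $0$. Diagonalizability is not needed, because $\Spec$ counts algebraic multiplicity.
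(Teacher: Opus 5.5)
Your proposal is correct and follows essentially the same route as the paper: you get \eqref{nplusrelation} by partitioning $R$ by the sign of $\kappa_{ijkr}$, then read off $\sigma_{n^\delta_{-1}}$ by adding the spectra of the diagonal blocks $L_0$, $L_r$ and $0_{\mathcal P_2}$ from Propositions~\ref{prop:blockdecomp} and~\ref{prop:blockmatrices}. Your explicit remarks that algebraic multiplicities add because the characteristic polynomial factors over \eqref{Vdecomp}, and your dimension check $\binom{m-3}{2}+3(m-2)=\binom{m}{2}$, are accurate refinements of the paper's bookkeeping.
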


\begin{proof}
Put $c:=c_b^\delta.$
Since $|R|=m-3,$
it follows from the definition of $n^{\delta}_{-1}$ that
\begin{equation}\label{partitionR}
R=
\{r\in R:\kappa_{ijkr}=1\}
\sqcup
\{r\in R:\kappa_{ijkr}=-1\}.
\end{equation}
Taking cardinalities in \eqref{partitionR} yields
\begin{equation}\label{nplusrelationproof}
|\{r\in R:\kappa_{ijkr}=1\}|
=
m-3-n^{\delta}_{-1},
\end{equation}
which proves \eqref{nplusrelation}.
By Proposition \ref{prop:blockdecomp}, $L_\delta(c)$ preserves the decomposition \eqref{Vdecomp} with respect to which it has the block form \eqref{blockdiag}.
Therefore
$\Spec(L_\delta(c))$
is obtained by combining the spectra of the diagonal blocks. By Proposition \ref{prop:blockmatrices}, the spectra of the blocks $L_0$ and $L_r$ are given in \eqref{specL0} and \eqref{specplus}.
By \eqref{nplusrelationproof}, there are $m-3-n^{\delta}_{-1}$ elements $r\in R$ satisfying $\kappa_{ijkr}=1$, and $n^{\delta}_{-1}$ elements $r\in R$ satisfying $\kappa_{ijkr}=-1$.
Hence the blocks $L_r$ contribute
\begin{equation}\label{Wrcontribution}
\Bigl\{
1^{m-3-n^{\delta}_{-1}},
(-1)^{n^{\delta}_{-1}},
(-\tfrac12)^{2(m-3-n^{\delta}_{-1})},
(\tfrac12)^{2n^{\delta}_{-1}}
\Bigr\}
\end{equation}
to the multiset spectrum.
The only remaining contribution comes from the residual Grassmannian component. Proposition \ref{prop:blockdecomp} implies that $L_\delta(c)$ vanishes on $\valg_{\mathcal{P}_2}$,
and since
$
\dim\valg_{\mathcal{P}_2}
=
|\mathcal{P}_2|
=
\binom{m-3}{2},
$
this block contributes $0^{\binom{m-3}{2}}.$ Combining this with \eqref{specL0} and \eqref{Wrcontribution} yields \eqref{localspectrumformula}.
\end{proof}

An immediate consequence of Theorem~\ref{thm:localpaschspectrum} is that the spectral classification of block idempotents reduces to the realization problem for the single parameter $n_{-1}^\delta$.

\begin{corollary}\label{cor:realizable}
The spectrum of a block idempotent in
$\alg^\delta(G_2(m))$
is completely determined by the single integer
$$
n^{\delta}_{-1}
=
|\{r\in R:\kappa_{ijkr}=-1\}|.
$$
Moreover, for every decoration of $G_2(m)$,
$$
0\le n^{\delta}_{-1}\le m-3.
$$
Conversely, for every block $b\in\mathcal B$ and every integer
$
0\le n\le m-3,
$
there exists a decoration $\delta$ of $G_2(m)$ for which
\begin{equation}\label{realizablevalue}
n^{\delta}_{-1}(b)=n.
\end{equation}
\end{corollary}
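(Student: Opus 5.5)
The first assertion follows directly from Theorem~\ref{thm:localpaschspectrum}. By \eqref{localspectrumformula} the multiset $\Spec(L_\delta(c_b^\delta))$ equals $\sigma_{n^\delta_{-1}}$. Here $m$ is fixed, so the spectrum depends only on the integer $n^\delta_{-1}(b)$. The bounds $0\le n^\delta_{-1}\le m-3$ hold because $n^\delta_{-1}(b)$ is the cardinality of a subset of $R$, and $|R|=m-3$; this is the content of \eqref{partitionR}.

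For the converse, fix $b=b_{ijk}$ and $0\le n\le m-3$. I would give an explicit decoration. Choose a subset $N\subseteq R$ with $|N|=n$, and set
\begin{equation*}
\delta(b_{ijr})=-1 \quad (r\in N),
\end{equation*}
with every other block decorated $+1$, including $b$ itself. I would then compute $\kappa_{ijkr}$ from \eqref{kjkr} for each $r\in R$. The blocks $b_{ikr}$ and $b_{jkr}$ are $+1$, since they are not of the form $b_{ijr'}$ with $r'\in N$. The block $b_{ijr}$ contributes $-1$ exactly when $r\in N$. Hence $\kappa_{ijkr}=-1$ if and only if $r\in N$, and so $n^\delta_{-1}(b)=|N|=n$, which is \eqref{realizablevalue}.

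The only point needing care is that the four blocks in $P_{\{i,j,k,r\}}$ are distinct. It must also be checked that flipping $b_{ijr}$ for $r\in N$ affects no other $\kappa_{ijkr'}$. This holds because $b_{ijr}$ lies in $P_{\{i,j,k,r'\}}$ only when $r'=r$. No real obstacle is expected: the argument is a direct bookkeeping check.
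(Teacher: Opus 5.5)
Your proposal is correct and follows essentially the same route as the paper. You read the first assertion and the bounds off Theorem~\ref{thm:localpaschspectrum} and $|R|=m-3$, and for the converse you use the paper's decoration: $\delta(b_{ijr})=-1$ for $r$ in an $n$-subset of $R$, all other blocks (including $b$) decorated $+1$, so that $\kappa_{ijkr}=\delta(b_{ijr})$.
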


\begin{proof}
The first assertion is immediate from Theorem
\ref{thm:localpaschspectrum}. Since
$R=\Omega\setminus\{i,j,k\}
$ satisfies $|R|=m-3$, the identity
$$
|\{r\in R:\kappa_{ijkr}=1\}|
=
m-3-n^{\delta}_{-1}
$$
implies
$0\le n^{\delta}_{-1}\le m-3.$
In the converse direction, to prove realizability, fix
$0\le n\le m-3$. Choose a cardinality $n$ subset $S\subseteq R$ and define a decoration $\delta$ as follows. Set
$\delta(b)=1$ and, for every $r\in R$, define
$$
\delta(b_{ijr})
=
\left\{
\begin{array}{ll}
-1,
&
r\in S,
\\
1,
&
r\notin S.
\end{array}
\right.
$$
Finally, for every remaining block $b$, including all blocks of the form
$b_{ikr}$ and $b_{jkr}$, $r\in R$, set
$\delta(b)=1$.
By construction,
$$
\kappa_{ijkr}
=
\delta(b)
\delta(b_{ijr})
\delta(b_{ikr})
\delta(b_{jkr})
=
\delta(b_{ijr}),
$$
so that
$$
\kappa_{ijkr}=-1
\quad\Longleftrightarrow\quad
r\in S.
$$
Hence
$$
n^{\delta}_{-1}
=
|\{r\in R:\kappa_{ijkr}=-1\}|
=
|S|
=
n.
$$
Thus, for the given block $b$, there exists a decoration of
$G_2(m)$ such that \eqref{realizablevalue} holds.
\end{proof}

\begin{definition}
For each $0\le k\le m-3$, the multiset
$\sigma_k$ defined by \eqref{sigmanot} is called the $k$-th \textit{spectral type}.
\end{definition}

Together Theorem~\ref{thm:localpaschspectrum}
and Corollary~\ref{cor:realizable} show that
$
\Spec(L_\delta(c_b^\delta))
=
\sigma_{n^{\delta}_{-1}(b)},
$
and that all $m-2$ spectral types $\sigma_0,\dots,\sigma_{m-3}$ occur.

Among all decorations, one family is particularly natural from both the
combinatorial and the cohomological point of view. These decorations are
attached to individual vertices of $\Omega$ and will play
a central role below: they admit the concrete matrix realizations of
Section~\ref{sec:matrixmodel} and generate the axial algebras studied in
Section~\ref{sec:axial}.

\begin{definition}\label{def:stardec}
For $\omega\in\Omega$ the \emph{star decoration at $\omega$} is
\begin{equation}\label{stardec}
\delta_\omega(b)=
\begin{cases}
-1, & \omega\in\supp(b),\\
\phantom{-}1, & \omega\notin\supp(b).
\end{cases}
\end{equation}
\end{definition}

\begin{lemma}\label{lem:starkappa}
$\kappa_{\delta_\omega}(P_\rho)=-1$ if and only if $\omega\in\rho$, for
every $\rho\in\binom{\Omega}{4}$.
\end{lemma}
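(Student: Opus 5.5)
The plan is a direct count. By \eqref{kappaijkr}, $\kappa_{\delta_\omega}(P_\rho)$ is the product of $\delta_\omega$ over the four blocks $b_{ijk},b_{ijr},b_{ikr},b_{jkr}$ of $P_\rho$, where $\rho=\{i,j,k,r\}$. These blocks are exactly those whose supports are the four $3$-subsets of $\rho$. By \eqref{stardec}, a block contributes $-1$ precisely when its support contains $\omega$. Hence
\[
\kappa_{\delta_\omega}(P_\rho)=(-1)^{N},\qquad N:=\#\Bigl\{\sigma\in\tbinom{\rho}{3}:\omega\in\sigma\Bigr\}.
\]

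Two cases remain. If $\omega\notin\rho$, then no $3$-subset of $\rho$ contains $\omega$. So $N=0$ and $\kappa_{\delta_\omega}(P_\rho)=1$. If $\omega\in\rho$, then a $3$-subset of $\rho$ contains $\omega$ exactly when it is not the complement of $\{\omega\}$ in $\rho$. Equivalently, these subsets correspond to the choices of two of the other three elements of $\rho$, so $N=\binom{3}{2}=3$ and $\kappa_{\delta_\omega}(P_\rho)=(-1)^3=-1$. Together the two cases give the equivalence stated in the lemma.

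An alternative route is cohomological. Note that $\delta_\omega$ agrees with $d_1$ applied to $\varepsilon(p)=-1$ for $p\ni\omega$? Checking: $d_1\varepsilon$ on $\{\omega,a,b\}$ equals $(-1)(-1)(1)=1$, so this does not work, and the direct count above is the cleaner argument. There is no real obstacle here; the only care needed is the parity count $N\in\{0,3\}$.
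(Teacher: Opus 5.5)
Your proof is correct and is essentially the paper's argument: the paper also notes that exactly three of the four $3$-subsets of $\rho$ contain $\omega$ when $\omega\in\rho$ and none do otherwise, so the product is $(-1)^3=-1$ or $+1$. Your abandoned cohomological aside had to fail, since any $d_1$-coboundary has trivial Pasch signs by $d_2\circ d_1=0$, whereas $\kappa_{\delta_\omega}$ is nontrivial.
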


\begin{proof}
By \eqref{d0d1d2}, $\kappa_{\delta_\omega}=d_2\delta_\omega$. Of the four
$3$-subsets of $\rho$, exactly three contain $\omega$ when
$\omega\in\rho$, and none does otherwise; hence the product of the four
values \eqref{stardec} equals $(-1)^3=-1$ in the first case and $+1$ in
the second.
\end{proof}

Thus the star decoration centered at $\omega$ produces the simplest possible Pasch-sign pattern: precisely the Pasch configurations whose support contains $\omega$ are negative.

The parameter $n^{\delta}_{-1}$ admits both a combinatorial and a
spectral interpretation:
$$
n^{\delta}_{-1}
=
|\{r\in R:\kappa_{ijkr}=-1\}|
=
\dim \ker (L(c_b^\delta)+\Id).
$$
Consequently, the local spectral classification of block idempotents
reduces to the distribution of negative Pasch configurations.


We now turn to two natural constructions built on the star decorations: concrete matrix realizations (Section~\ref{sec:matrixmodel}) and axial structure (Section~\ref{sec:axial}).

\section{Matrix models}\label{sec:matrixmodel}

The incidence algebra of $G_2(n)$, and one distinguished decorated
algebra, admit concrete realizations by symmetric matrices. Throughout
this section $\fie$ is a formally real Pythagorean field and $\ualg$ denotes a Euclidean space of dimension $n$ with a
fixed orthonormal basis $v_1,\dots,v_n$, we write
$$
S_{ij}:=v_iv_j^{T}+v_jv_i^{T}\in\Sym(\ualg),
\qquad
i\neq j,
$$
$\Sym_0(\ualg):=\fie\{S_{ij}:i<j\}$ for the symmetric endomorphisms of $\ualg$ whose matrices have vanishing
diagonal in the fixed basis, and $\pi:\Sym(\ualg)\to\Sym_0(\ualg)$ for the projection
along the diagonal (the span of the endomorphisms $v_{i}v_{i}^{T}$).
The Jordan product is written $X\bullet Y=\tfrac12(XY+YX)$, so that the operation
\begin{equation}\label{diamond}
X\odot Y:=\pi(XY+YX)=2\,\pi(X\bullet Y)
\end{equation}
is the projected Jordan product.

\begin{proposition}\label{prop:evenpart}
    The algebra $(\Sym_0(\ualg),\odot)$ is isomorphic to the incidence
    algebra of $G_2(n)$,
\begin{equation}\label{evenpart}
\bigl(\Sym_0(\ualg),\odot\bigr)
\ \cong\
\alg^{\mathbf 1}\bigl(G_2(n)\bigr),
\end{equation}
via the linear map sending $S_{ij}$ to $e_{ij}$.
\end{proposition}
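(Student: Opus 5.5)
The plan is to verify directly that the linear map $\Phi:S_{ij}\mapsto e_{ij}$ is a bijection that carries $\odot$ to $\circ_{\mathbf 1}$ on basis elements. Since both products are bilinear and commutative, this suffices.

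Bijectivity is immediate. The matrices $S_{ij}$ with $i<j$ are linearly independent, because $S_{ij}$ has nonzero entries only in positions $(i,j)$ and $(j,i)$. They therefore form a basis of $\Sym_0(\ualg)$, whose dimension is $\binom n2=|\mathcal P|$.

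The main step is computing $S_{ij}S_{kl}+S_{kl}S_{ij}$. Using $v_a^Tv_b=\delta_{ab}$, expand $S_{ij}S_{kl}$ into four terms of the form $v_av_b^Tv_cv_d^T=\delta_{bc}v_av_d^T$. I would split into three cases according to $|\{i,j\}\cap\{k,l\}|$.

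\emph{Disjoint pairs.} Every inner product vanishes, so the product is $0$. This matches $e_{ij}\circ e_{kl}=0$, since $ij$ and $kl$ are not collinear.

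\emph{Pairs sharing exactly one index.} Write the pairs as $ij$ and $ik$ with $j\ne k$. Then
\[
S_{ij}S_{ik}=v_jv_k^T,
\]
because the other three terms contain a vanishing inner product, and symmetrically $S_{ik}S_{ij}=v_kv_j^T$. Their sum is $S_{jk}$, which already lies in $\Sym_0$, so $\pi$ fixes it and $S_{ij}\odot S_{ik}=S_{jk}$. This matches $e_{ij}\circ_{\mathbf 1}e_{ik}=e_{jk}$ from \eqref{multiplication} with $\delta\equiv1$.

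\emph{Equal pairs.} We have $S_{ij}^2=v_iv_i^T+v_jv_j^T$, which is diagonal, so $\pi$ kills it and $S_{ij}\odot S_{ij}=0=e_{ij}\circ e_{ij}$.

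These three cases exhaust all pairs of basis elements, so $\Phi$ is an algebra isomorphism.

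There is no real obstacle here. The only point requiring care is the equal-pair case, where the projection $\pi$ is essential, together with keeping track that no factor of $2$ enters the one-shared-index case: the definition $X\odot Y=\pi(XY+YX)$ has no $\tfrac12$, and the sum yields exactly $S_{jk}$ with coefficient $1$.
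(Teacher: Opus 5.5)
Your proposal is correct and matches the paper's proof: bijectivity by independence of the $S_{ij}$ plus a dimension count, then a check of $\odot$ on the three kinds of basis pairs (disjoint, one shared index, equal), with $\pi$ killing the diagonal $S_{ij}^2$. Your computation $S_{ik}S_{ij}=v_kv_j^{T}$ is also the correct one; the paper's text writes $v_kv_i^{T}$, which is a typo.
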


\begin{proof}
Since the stated map is evidently injective, it is a linear isomorphism for dimension reasons.
For distinct $i,j,k$ one has
$S_{ij}S_{ik}=v_jv_k^{T}$ and $S_{ik}S_{ij}=v_kv_i^{T}$, whence
$S_{ij}\odot S_{ik}=\pi(S_{jk})=S_{jk}$. Furthermore
$S_{ij}\odot S_{ij}=2\pi(v_iv_i^{T}+v_jv_j^{T})=0$, and
$S_{ij}\odot S_{kl}=0$ when $\{i,j\}\cap\{k,l\}=\emptyset$. These are
exactly the relations \eqref{multiplication}--\eqref{multiplication2}
for the trivial decoration $\delta\equiv\mathbf 1$.
\end{proof}

    \begin{definition}\label{def:vm}
Denote by $\vm{\ualg}$  the commutative algebra
with underlying vector space
$$\ualg\oplus\Sym_0(\ualg)
$$ and multiplication
\begin{equation}\label{vmproduct}
\begin{aligned}
(x,X)\diamond(y,Y)&=
\bigl(-Xy-Yx,\
\pi\bigl(2X\bullet Y-xy^{T}-yx^{T}\bigr)\bigr)
\end{aligned}
\end{equation}
for $x, y \in \ualg$ and $X, Y \in \Sym_0(\ualg)$.
\end{definition}
Note $\dim\vm{\ualg}=n+\binom n2=\binom{n+1}{2}$.
Restricted to the summands $\ualg$ and $\Sym_0(\ualg)$, \eqref{vmproduct} reads
\begin{align}\label{vmsummands}
\begin{aligned}
x\diamond y&=-\pi\bigl(xy^{T}+yx^{T}\bigr),
\\
x\diamond X&=-Xx,
\\
X\diamond Y&=\pi(XY+YX),
\end{aligned}
\end{align}
so that the map
$X\longmapsto(0,X)$ is an algebra embedding
$$
(\Sym_0(\ualg),\diamond)
\hookrightarrow
(\ualg \oplus \Sym_0(\ualg), \diamond).
$$

\begin{theorem}\label{thm:vm}
Let $m\ge3$, $\omega\in\Omega$, and let $\ualg:=\fie\{\Omega'\}$ for
$\Omega':=\Omega\setminus\{\omega\}$, with orthonormal basis
$(v_i)_{i\in\Omega'}$. Then there is an algebra isomorphism
\begin{equation}\label{vmiso}
(\alg^{\delta_\omega}\bigl(G_2(m)\bigr), \circ_\delta)\ \cong\ (\vm{\ualg}, \diamond),
\end{equation}
where $\delta_\omega$ is a star decoration \eqref{stardec} and an isomorphism being given on the standard basis by
\begin{align}
&e_{\omega i}\longmapsto v_i,
\qquad
e_{ij}\longmapsto S_{ij}
&&
(i,j\in\Omega').
\end{align}
Under \eqref{vmiso}:
\begin{enumerate}
\item[(a)] the block idempotents of the blocks containing $\omega$ are
\begin{equation}\label{vmaxis}
c_{b_{\omega ij}}^{\delta_\omega}
=
-\tfrac12\bigl(u,\ \pi(uu^{T})\bigr),
\qquad
u=v_i+v_j ,
\end{equation}
while the remaining block idempotents lie in $\Sym_0(\ualg)$;
\item[(b)] for $b=b_{\omega ij}$ the involution $\theta_b$ of
Corollary~\ref{cor:gradingauto} is conjugation by the reflection
${\refl}\in O(\ualg)$ fixing $\fie\{v_i,v_j\}$ and acting as $-1$ on its
orthogonal complement:
$\theta_b(x,X)=({\refl}x,\,{\refl}X{\refl})$;
\item[(c)] $\alg^{\delta_\omega}(G_2(m))$ is generated by the block
idempotents $c^{\delta_\omega}_{b}$, $b\ni\omega$; already $m-1$ of them
suffice, namely those indexed by the edges of a Hamiltonian cycle of the
complete graph on $\Omega'$ under $b_{\omega ij}\leftrightarrow\{i,j\}$.
\end{enumerate}
\end{theorem}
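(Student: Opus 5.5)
The plan is to verify directly that the linear map $\Phi$, defined on the standard basis by $\Phi(e_{\omega i})=v_i$ and $\Phi(e_{ij})=S_{ij}$ for $i,j\in\Omega'$, is an algebra isomorphism. It sends a basis to a basis: $\{v_i\}\cup\{S_{ij}: i<j\}$ is a basis of $\ualg\oplus\Sym_0(\ualg)$, of dimension $(m-1)+\binom{m-1}{2}=\binom m2$. So only the multiplication table needs checking, using \eqref{vmsummands} together with \eqref{multiplication}--\eqref{multiplication2} for $\delta_\omega$. Write $i,j,k,l$ for distinct elements of $\Omega'$. The cases are the following.
\begin{itemize}
\item[(i)] $e_{\omega i}\circ e_{\omega j}=\delta_\omega(b_{\omega ij})e_{ij}=-e_{ij}$, against $v_i\diamond v_j=-\pi(S_{ij})=-S_{ij}$.
\item[(ii)] $e_{\omega i}\circ e_{\omega i}=0$, against $v_i\diamond v_i=-2\pi(v_iv_i^T)=0$.
\item[(iii)] $e_{\omega i}\circ e_{ij}=-e_{\omega j}$, against $v_i\diamond S_{ij}=-S_{ij}v_i=-v_j$.
\item[(iv)] $e_{\omega i}\circ e_{jk}=0$, against $-S_{jk}v_i=0$.
\item[(v)] Products inside $\Sym_0(\ualg)$. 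Here $\omega\notin\supp$, so the signs are $+1$, and this case is exactly Proposition~\ref{prop:evenpart}.
\end{itemize}
Every case reduces to the identities $S_{ij}v_i=v_j$, $S_{ij}v_k=0$ and $\pi(v_iv_i^T)=0$.

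For (a), I substitute into \eqref{blockidempotent}. With $\delta_\omega(b_{\omega ij})=-1$,
\[
c_{b_{\omega ij}}^{\delta_\omega}\mapsto -\tfrac12\bigl(v_i+v_j+S_{ij}\bigr),
\]
and for $u=v_i+v_j$ one has $\pi(uu^T)=S_{ij}$. Blocks not containing $\omega$ are spanned by $e_{kl}$ with $k,l\in\Omega'$, so their idempotents lie in $\Sym_0(\ualg)$.

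For (b), take $b=b_{\omega ij}$, so $\supp(b)=\{\omega,i,j\}$, and compare signs.
\begin{itemize}
\item[(1)] For $e_{\omega k}$, $\varepsilon_b=(-1)^{|\{\omega,k\}\cap\supp(b)|}$ equals $+1$ if $k\in\{i,j\}$ and $-1$ otherwise. This is the eigenvalue of $\refl$ on $v_k$.
\item[(2)] For $e_{kl}$, $\varepsilon_b$ is the product of the $\refl$-eigenvalues of $v_k$ and $v_l$. Since $\refl$ is diagonal, $\refl S_{kl}\refl=\pm S_{kl}$ with exactly this sign, and $\refl$ commutes with $\pi$.
\end{itemize}

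For (c), I will use the following principle. If $a$ lies in a subalgebra $\mathcal S$, then every polynomial in $L(a)$ maps $\mathcal S$ into itself. Since $L(a)$ is diagonalizable with eigenvalues in $\{0,\pm1,\pm\frac12\}$ by Proposition~\ref{prop:blockmatrices}, every Peirce projection of an element of $\mathcal S$ relative to $a$ stays in $\mathcal S$.

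The key observation comes from Lemma~\ref{lem:starkappa}. For a block $b\ni\omega$, every $4$-set $\rho\supset\supp(b)$ contains $\omega$, so $\kappa=-1$ for all $r\in R$. By \eqref{specplus}, the $W_r$ then contribute only the eigenvalues $-1$ and $\tfrac12$. Hence the sum of the $1$- and $(-\tfrac12)$-eigenspaces of $L(c_b)$ is exactly $\valg_{\mathcal P_0}$, and the projection onto $\valg_{\mathcal P_0}$ is a polynomial in $L(c_b)$.

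Now let $i_1,\dots,i_{m-1}$ be the Hamiltonian cycle, and let $a_t$ be the idempotent of $b_{\omega i_t i_{t+1}}$. The next idempotent $a_{t+1}$ is $-\tfrac12(e_{\omega i_{t+1}}+e_{\omega i_{t+2}}+e_{i_{t+1}i_{t+2}})$. Its only component in $\valg_{\mathcal P_0}$ for $a_t$ is $-\tfrac12 e_{\omega i_{t+1}}$; the other two points lie in $W_{i_{t+2}}$. So every $e_{\omega k}$ with $k\in\Omega'$ lies in the generated subalgebra. Then (i) gives $e_{kl}=-e_{\omega k}\circ e_{\omega l}$ for all $k,l$, and the subalgebra is everything.

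The main obstacle is this generation argument. It needs two consecutive edges of the cycle sharing a vertex, and the Peirce-projection trick needs the absence of the eigenvalues $1$ and $-\tfrac12$ outside $\mathcal P_0$. The case $m=3$ must be treated separately or excluded: there $\Omega'$ has only two points, the cycle degenerates to a single edge, and one idempotent cannot generate the $3$-dimensional algebra. So I expect to need $m\ge4$ in (c).
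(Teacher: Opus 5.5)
Your proposal is correct. The isomorphism, (a) and (b) are checked exactly as in the paper: a case-by-case comparison of the two multiplication tables, and of the signs $(-1)^{|p\cap\{\omega,i,j\}|}$ with the diagonal entries of $\refl$.

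For (c) you take a genuinely different route. The paper works in $\vm{\ualg}$ and computes one product,
\[
c_{b_{\omega ij}}\diamond c_{b_{\omega ik}}=\tfrac12 c_{b_{\omega ij}}+\tfrac12 c_{b_{\omega ik}}+\tfrac12 v_i ,
\]
so $v_i$ is an explicit quadratic expression in two generators that share the index $i$. You stay inside $\alg^{\delta_\omega}(G_2(m))$ and obtain $e_{\omega i_{t+1}}$ as a Peirce projection. Relative to $a_t$, you project $a_{t+1}$ onto $A_1\oplus A_{-1/2}=\valg_{\mathcal P_0}$. This works because, by Lemma~\ref{lem:starkappa}, every Pasch configuration through a block of $\St(\omega)$ is negative, so the eigenvalues $1$ and $-\tfrac12$ occur only on $\valg_{\mathcal P_0}$.

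What each route buys:
\begin{itemize}
\item The paper's computation is shorter, fully explicit, and needs no spectral input.
\item Yours needs no matrix model and shows why generation works: it uses only that the generating blocks satisfy $n^{\delta_\omega}_{-1}=m-3$.
\item Yours requires diagonalizability of $L(a_t)$, which Proposition~\ref{prop:blockmatrices} does not actually state, since it only gives characteristic polynomials. It does follow at once from $L_r^2=\tfrac12\Id+\tfrac12\kappa_{ijkr}L_r$ on each $\valg_{W_r}$, and similarly $L_0^2=\tfrac12\Id+\tfrac12 L_0$ on $\valg_{\mathcal P_0}$. Each of these quadratics has the two distinct roots $\kappa$ and $-\kappa/2$.
\item Both arguments need the same combinatorial input: every vertex of $\Omega'$ lies on two of the chosen edges.
\end{itemize}

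Your caveat about $m=3$ is right, and it applies to the statement itself, not only to your method. When $m=3$ there is a single block through $\omega$. Its idempotent $c$ generates only $\fie\,c$, and $K_2$ has no Hamiltonian cycle. So (c) holds only for $m\ge4$, which the paper's proof also tacitly assumes.
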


\begin{proof}
By \eqref{stardec}, $\delta_\omega(b_{\omega ij})=-1$ and
$\delta_\omega(b_{ijk})=+1$ for $i,j,k\in\Omega'$, so
\eqref{multiplication}--\eqref{multiplication2} give
\begin{align}
\begin{aligned}
e_{\omega i}\circ_{\delta_\omega} e_{\omega j}&=-e_{ij},
\\
e_{\omega i}\circ_{\delta_\omega} e_{ij}&=-e_{\omega j},
\\
e_{\omega i}\circ_{\delta_\omega} e_{jk}&=0&& (i\notin\{j,k\}),
\\
e_{ij}\circ_{\delta_\omega} e_{ik}&=e_{jk},
\end{aligned}
\end{align}
all other products of basis vectors vanishing. On the other side
\eqref{vmsummands} gives
\begin{align}
\begin{aligned}
&v_i\diamond v_j=-S_{ij},&&
v_i\diamond S_{ij}=-v_j,
\\
&v_i\diamond S_{jk}=0&& &&(i\notin\{j,k\}),
\\
&S_{ij}\diamond S_{ik}=S_{jk},
\end{aligned}
\end{align}
together with $v_i\diamond v_i=-2\pi(v_iv_i^{T})=0$ and, by
Proposition~\ref{prop:evenpart}, $S_{ij}\diamond S_{ij}=0$ and
$S_{ij}\diamond S_{kl}=0$ for disjoint index pairs. The two tables agree,
which proves \eqref{vmiso}.

(a) For $u=v_i+v_j$ one has $\pi(uu^{T})=S_{ij}$, so the right-hand side
of \eqref{vmaxis} is the image of
$-\tfrac12(e_{\omega i}+e_{\omega j}+e_{ij})$, which is
$c^{\delta_\omega}_{b_{\omega ij}}$ because
$\delta_\omega(b_{\omega ij})=-1$. A block $b$ with $\omega\notin\supp(b)$
has all its points in $\binom{\Omega'}{2}$, so $c_b^{\delta_\omega}$ maps
into $\Sym_0(\ualg)$.

(b) By Corollary~\ref{cor:gradingauto},
$\theta_b(e_p)=(-1)^{|p\cap\{\omega,i,j\}|}e_p$. Hence
$e_{\omega k}\mapsto\eta_k e_{\omega k}$ and
$e_{kl}\mapsto\eta_k\eta_l e_{kl}$, where $\eta_k=+1$ for $k\in\{i,j\}$
and $\eta_k=-1$ otherwise. Under \eqref{vmiso} this is $x\mapsto {\refl}x$ and
$S_{kl}\mapsto {\refl}S_{kl}{\refl}$ with ${\refl}=\operatorname{diag}(\eta)$.

(c) Let $\malg'\subseteq\vm{\ualg}$ be the subalgebra generated by the
elements \eqref{vmaxis}. For distinct $i,j,k\in\Omega'$,
\eqref{vmproduct} gives
$$
c_{b_{\omega ij}}\diamond c_{b_{\omega ik}}
=
\tfrac12 c_{b_{\omega ij}}+\tfrac12 c_{b_{\omega ik}}+\tfrac12\,v_i ,
$$
so $v_i\in\malg'$ if two of the chosen idempotents share the
index $i$; and then $S_{ij}=-v_i\diamond v_j\in\malg'$ for all
$i,j\in\Omega'$. A family of blocks through $\omega$ produces every
$v_i$ precisely when the corresponding edges of the complete graph on
$\Omega'$ have minimum degree $2$; such a subgraph has at least
$|\Omega'|=m-1$ edges, and a Hamiltonian cycle attains this bound.
\end{proof}

Combining \eqref{vmiso} with Proposition~\ref{prop:evenpart} gives

\begin{corollary}\label{cor:selfsim2}
For every $m\ge3$ and $\omega\in\Omega$, there is an algebra isomorphism
\begin{equation}\label{selfsim2}
\alg^{\delta_\omega}\bigl(G_2(m)\bigr)
\ \cong\
\ualg\ \oplus\ \alg^{\mathbf 1}\bigl(G_2(m-1)\bigr),
\qquad
\ualg=\fie^{\,\Omega\setminus\{\omega\}},
\end{equation}
the two summands being the odd and the even part of the grading
\eqref{taugrading} for $\tau=\{\omega\}$, with $\ualg$ acting on the even
part as its natural module.
\end{corollary}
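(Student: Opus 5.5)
The plan is to read off Corollary~\ref{cor:selfsim2} from Theorem~\ref{thm:vm} and Proposition~\ref{prop:evenpart}, identifying the two summands with the graded pieces of $\theta_{\{\omega\}}$. First, apply Lemma~\ref{lem:gradingauto} with $\tau=\{\omega\}$. Since $|p\cap\{\omega\}|=1$ exactly when $p=\omega i$, we get
$\mathcal P^{-}(\{\omega\})=\{\omega i:i\in\Omega'\}$ and $\mathcal P^{+}(\{\omega\})=\binom{\Omega'}{2}$. Hence the odd part is $\valg_{\mathcal P^-}$, of dimension $m-1$, and the even part is $\valg\bigl(\Omega'\bigr)$. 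This grading holds for every decoration, in particular for $\delta_\omega$.

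Next, transport this grading through the isomorphism \eqref{vmiso}. The basis vectors $e_{\omega i}$ map to $v_i$, so the odd part is carried onto $\ualg\oplus0$. The vectors $e_{ij}$ with $i,j\in\Omega'$ map to $S_{ij}$, so the even part is carried onto $0\oplus\Sym_0(\ualg)$.

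For the even part, Lemma~\ref{lem:closure} shows that $\valg(\Omega')$ is a subalgebra isomorphic to $\alg^{\delta'}(G_2(m-1))$, where $\delta'$ is the restriction of $\delta_\omega$. All blocks inside $\Omega'$ avoid $\omega$, so $\delta'\equiv\mathbf 1$. Alternatively, by \eqref{vmsummands} the even summand is $(\Sym_0(\ualg),\odot)$, and Proposition~\ref{prop:evenpart} with $n=m-1$ identifies it with $\alg^{\mathbf 1}(G_2(m-1))$. Either route gives the even summand.

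It remains to describe the remaining products. The product of two even elements stays even, as just shown. The mixed product is given by \eqref{vmsummands} as $x\diamond X=-Xx$. This is, up to sign, the natural action of symmetric matrices on $\ualg$, so $\ualg$ is a module over the even part. Finally, $x\diamond y=-\pi(xy^T+yx^T)$ lands in the even part, which is consistent with the $\mathbb Z_2$-grading.

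The only point needing care is how to read the direct sum in \eqref{selfsim2}. It is not a direct product of algebras: it is a vector-space decomposition into graded pieces, with product given by \eqref{vmproduct}. The module action also carries a sign, which can be absorbed by replacing $v_i$ with $-v_i$ if one wants the literal natural action. That replacement is a gauge change $\varepsilon=d_0(\mathbf 1_{\{\omega\}})$, which is an automorphism by Lemma~\ref{lem:gradingauto}, so it does not affect the identification. With this reading, the corollary is a restatement of the earlier results and needs no new computation.
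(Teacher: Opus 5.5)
Your main argument is the paper's route, and it is correct. The paper obtains the corollary by combining \eqref{vmiso} with Proposition~\ref{prop:evenpart}. Your identification of $\mathcal P^{\pm}(\{\omega\})$, the transport of the grading through \eqref{vmiso}, and the alternative via Lemma~\ref{lem:closure} (where $\delta_\omega$ restricts to $\mathbf 1$ on blocks inside $\Omega'$) all check out. So does your remark that $\oplus$ is a graded vector-space splitting rather than a product of algebras.

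Your last step is wrong, however. Replacing every $v_i$ by $-v_i$ is the automorphism $\theta_{\{\omega\}}$, and precisely because it is an automorphism it cannot change any product formula. If $\phi$ denotes the isomorphism \eqref{vmiso}, then $\phi\circ\theta_{\{\omega\}}$ is just another isomorphism onto the same algebra $(\vm{\ualg},\diamond)$. The matrices $S_{ij}=v_iv_j^T+v_jv_i^T$ are unchanged by a global sign flip of the basis, and the operator $x\mapsto X\diamond x=-Xx$ on $\ualg$ does not depend on the basis. Concretely, $(-v_i)\diamond S_{ij}=v_j=-S_{ij}(-v_i)$.

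In fact, for $m\ge4$ no re-identification removes the sign while keeping the even product equal to $\odot$. Suppose $-TXT^{-1}=\alpha(X)$ for some $T\in\mathrm{GL}(\ualg)$ and some automorphism $\alpha$ of $(\Sym_0(\ualg),\odot)$. Conjugate the identity $S_{ij}S_{ik}+S_{ik}S_{ij}=S_{jk}$ by $T$: the left side becomes $\alpha(S_{ij})\alpha(S_{ik})+\alpha(S_{ik})\alpha(S_{ij})$ and the right side becomes $-\alpha(S_{jk})$. Applying $\pi$ then gives $\alpha(S_{jk})=-\alpha(S_{jk})$, which is absurd.

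The sign can only be moved onto the even product. The map $(x,X)\mapsto(x,-X)$ transports $\diamond$ to the product with $x\star y=\pi(xy^T+yx^T)$, $x\star X=Xx$ and $X\star Y=-\pi(XY+YX)$. Its even part is still a copy of $\alg^{\mathbf 1}(G_2(m-1))$, now via $e_{ij}\mapsto -S_{ij}$.

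So either use that rescaling, or read ``natural module'' in the statement as ``natural module up to the sign $-1$''. The latter is all that the paper's one-line proof actually gives. With either correction your argument is complete.
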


Thus the star decoration at $\omega$ adjoins the natural module to the
\emph{undecorated} Grassmannian algebra one step down. This is transverse
to the self-similarity of Section~\ref{sec:proof}: there the passage is
$m\mapsto m-3$ and the residual copy of $G_2(m-3)$ is the zero eigenspace
of a block idempotent, whereas here it is $m\mapsto m-1$ and the residual
copy of $G_2(m-1)$ is the even part of a $\mathbb Z_2$-grading.

Assume that $\fie=\R{}$. Then the two smallest instances of \eqref{selfsim2}, at $m=3$ and $m=4$ (see Examples~\ref{ex:polar3} and \ref{ex:mutant} below),
deserve separate comment. Apart from their place in the
$G_2(m)$-hierarchy, $\vm{\mathbb R^2}$ and $\vm{\mathbb R^3}$ are
themselves exact radial Hsiang algebras in the sense of
\cite{Fox-Tkachev,Nadirashvili-Tkachev-Vladuts}, and together the pair
illustrates the concept of \emph{polar}
Hsiang algebras, built from a classical symmetric Clifford system. On the other hand, these are the only Hsiang algebras in the family $G_2(m)$. Indeed, for $m\ge 5$ the eigenvalue $0$ occurs in the Peirce spectrum with positive multiplicity, which is incompatible with the Hsiang property.

Recall that a metrized commutative algebra $(\alg,\scal{\dum}{\dum})$ is called
\emph{polar}, or of \emph{Clifford type},  if it admits an orthogonal $\mathbb Z_2$-grading
$
\alg=\alg_0\oplus\alg_1,
$
where $\alg_0$ is a trivial algebra and $x_0(x_0x_1)=\scal{x_1}{x_1}\,x_0$
for all $x_i\in\alg_i$; see \cite{Fox-Tkachev}. Polar algebras are the best understood subclass of Hsiang algebras. They arise naturally in the construction of algebraic minimal cones via symmetric Clifford systems \cite{Tkachev-cliff}. Symmetric Clifford systems play a central role in the celebrated work of Ferus, Karcher, and M\"unzner \cite{Ferus-Karcher-Munzner} on isoparametric hypersurfaces in spheres. In fact, there is a natural correspondence between symmetric Clifford systems and polar algebras; see \cite[Section~6.5]{Nadirashvili-Tkachev-Vladuts}.

In \cite{Fox-Tkachev}, a distinguished subclass of Hsiang algebras, called \emph{mutants}, was introduced as the triples of classical Hurwitz algebras. The algebras considered in Examples~\ref{ex:polar3} and \ref{ex:mutant} are isomorphic with the triples of the real field $\R{}$ and the complex field $\mathbb C$ viewed as a real algebra, respectively, and therefore carry canonical symmetric Clifford system structures in the sense of \cite{Fox-Tkachev}; see also \cite[Section~6.5]{Nadirashvili-Tkachev-Vladuts}. Moreover, after a suitable normalization, the $6$-dimensional case algebra is isomorphic to a Cayley-Dickson style doubling of the $3$-dimensional cross-product algebra in the sense of \cite{FoxTka26}.

\begin{example}\label{ex:polar3}
Let $\ualg=\mathbb R^2$. Write
$$
x=x_1v_1+x_2v_2,
\qquad
X=
\begin{pmatrix}
0&y_{12}\\
y_{12}&0
\end{pmatrix}.
$$
With respect to the inner product
$$
\langle(x,X),(y,Y)\rangle
=
x^Ty+\tfrac12\tr(XY),
$$
the cubic form of $(\vm{\ualg},\diamond)$ is
$$
u(x,X)
=
\langle(x,X),(x,X)\diamond(x,X)\rangle
=
\tr(X^3)-3x^TXx
=
-6\,x_1x_2y_{12},
$$
since $X^2=y_{12}^2\Id$ is scalar, whence $X^3=y_{12}^2X$ is traceless.
Thus $\vm{\mathbb R^2}$ is the $3$-dimensional metrized algebra whose
cubic form is, up to normalization, the split product $x_1x_2y_{12}$ of
three independent linear forms. Here $\Sym_0(\mathbb R^2)=\fie\{y_{12}\}$
is one-dimensional, and $S_{12}\odot S_{12}=2\pi(v_1v_1^T+v_2v_2^T)=0$ by
\eqref{diamond} (equivalently, $G_2(2)$ has no blocks, so
Proposition~\ref{prop:evenpart} gives the zero algebra); thus
$\Sym_0(\mathbb R^2)$ is a null algebra. Consequently $\vm{\mathbb R^2}$
satisfies the defining axioms of a polar algebra: it is a genuine
\emph{polar Hsiang algebra}, built from the rank-$1$ symmetric Clifford
system on $\mathbb R^2$, in the sense of \cite{Ferus-Karcher-Munzner}. By
Theorem~\ref{thm:vm} (with $m=3$), this is precisely the star-decorated
incidence algebra of $G_2(3)$, whose underlying PSTS is the single block
on three points.
\end{example}

\begin{example}\label{ex:mutant}
Let $\ualg=\mathbb R^3$. Write
$$
x=x_1v_1+x_2v_2+x_3v_3,
\qquad
X=
\begin{pmatrix}
0&y_{12}&y_{13}\\
y_{12}&0&y_{23}\\
y_{13}&y_{23}&0
\end{pmatrix}.
$$
With respect to the inner product
$$
\langle(x,X),(y,Y)\rangle
=
x^Ty+\tfrac12\tr(XY),
$$
the cubic form of $(\vm{\ualg},\diamond)$ is
$$
\begin{aligned}
u(x,X)
&=
\langle(x,X),(x,X)\diamond(x,X)\rangle=
\tr(X^3)-3x^TXx\\
&=
6\bigl(
y_{12}y_{13}y_{23}
-y_{12}x_1x_2
-y_{13}x_1x_3
-y_{23}x_2x_3
\bigr).
\end{aligned}
$$
Introducing complex coordinates
$z_1=y_{12}+\mathrm{i}x_3$, $z_2=y_{13}+\mathrm{i}x_2$, and $z_3=y_{23}+\mathrm{i}x_1$
one obtains
\begin{align}\label{6dmutant}
u(x,X)=6\operatorname{Re}(z_1z_2z_3).
\end{align}
This shows that $\vm{\mathbb R^3}$ is isomorphic to the metrized commutative
algebra associated, up to normalization, with the cubic form \eqref{6dmutant} known in the theory of Hsiang algebras as a \emph{mutant Hsiang
algebra}; see \cite{Nadirashvili-Tkachev-Vladuts,Fox-Tkachev}. By Theorem~\ref{thm:vm}, this is precisely the star-decorated incidence
algebra of $G_2(4)$, for which the underlying PSTS is the Pasch configuration.
\end{example}

\section{Axial structure of the star-decorated algebras}\label{sec:axial}

Recall that an \emph{axial algebra} is a commutative nonassociative
algebra generated by a set of idempotents, its \emph{axes}, each of which
is semisimple with $\Spec L(a)$ contained in a fixed finite set
$\Lambda$, and whose Peirce decomposition obeys a \emph{fusion law}: a
prescribed rule
$
A_\lambda\circ A_\mu\subseteq\bigoplus_{\nu\in\fusion(\lambda,\mu)}A_\nu
$
valid at every axis. The law is $\mathbb Z_2$-graded if
$\Lambda=\Lambda_+\sqcup\Lambda_-$ compatibly with the parity, in which
case the sign map of the grading is an involutive automorphism, the
Miyamoto involution of the axis; axes of Jordan type, with
$\Lambda=\{1,0,\eta\}$, are the classical case
\cite{Hall-Rehren-Shpectorov,Matsuo-3transposition}. By
\eqref{localspectrumformula} the Peirce spectrum of a block idempotent
lies in
$$
\Lambda=\{1,-1,\tfrac12,-\tfrac12,0\},
$$
the sign-doubled analogue of $\{1,0,\tfrac12\}$, which makes the question
meaningful. We show that block idempotents are axes precisely when all
Pasch configurations through the block are negative, and that the star
decorations of Definition~\ref{def:stardec} realize this on a maximal
set of blocks.

\begin{definition}\label{def:starax}
For $\omega\in\Omega$ and a decoration $\delta$ put
$$
\St(\omega):=\{b\in\mathcal B:\ \omega\in\supp(b)\},
\qquad
\Ax(\delta):=\{b\in\mathcal B:\ n^{\delta}_{-1}(b)=m-3\},
$$
so that $|\St(\omega)|=\binom{m-1}{2}$.
\end{definition}

Throughout this section we abbreviate the Peirce eigenspaces of the fixed idempotent
 $c$ by
$$A_\lambda:=\ker\bigl(L(c)-\lambda\,\Id\bigr).
$$

\begin{theorem}\label{thm:grading}
Let $b\in\mathcal B$. Every Peirce eigenspace of $c_b^\delta$ is
homogeneous for the grading \eqref{apriorigrading} if and only if
$b\in\Ax(\delta)$. In that case
\begin{equation}\label{fusiongrading}
\alg_{+}(b)=A_{1}\oplus A_{-1/2}\oplus A_{0},
\qquad
\alg_{-}(b)=A_{-1}\oplus A_{1/2},
\end{equation}
and $\theta_b$ is the sign map of this decomposition.
\end{theorem}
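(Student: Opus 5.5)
The proof rests on the block decomposition of Proposition~\ref{prop:blockdecomp}. By Corollary~\ref{cor:gradingauto}, the even part is $\alg_+(b)=\valg_{\mathcal P_0}\oplus\valg_{\mathcal P_2}$ and the odd part is $\alg_-(b)=\bigoplus_{r\in R}\valg_{W_r}$. Each summand of \eqref{Vdecomp} is $L_\delta(c_b^\delta)$-invariant, and the operator is diagonalizable on each, since $L_0$ and $L_r$ have eigenvalues $1,-\frac12$ or $-1,\frac12$ with the repeated eigenvalue of geometric multiplicity $2$: for a symmetric matrix $\lambda I-L_r$ of rank one this is immediate. Hence every eigenspace splits as
$$A_\lambda=(A_\lambda\cap\alg_+(b))\oplus(A_\lambda\cap\alg_-(b)),$$
the two pieces being the sums of the $\lambda$-eigenspaces of the blocks lying in the even part and in the odd part, respectively. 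With this splitting, an eigenspace $A_\lambda$ is homogeneous exactly when the eigenvalue $\lambda$ does not occur simultaneously among the even blocks $L_0,0_{\mathcal P_2}$ and the odd blocks $L_r$.

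Next I list the eigenvalues of each part. By \eqref{specL0} and \eqref{B2zero}, the even part carries $1$ and $-\frac12$ (from $L_0$), and also $0$ whenever $\binom{m-3}{2}>0$. By \eqref{specplus}, the odd part carries $\{1,-\frac12\}$ from each $r$ with $\kappa_{ijkr}=1$ and $\{-1,\frac12\}$ from each $r$ with $\kappa_{ijkr}=-1$. The eigenvalue $0$ never occurs in the odd part.

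For the implication \emph{$b\in\Ax(\delta)\Rightarrow$ homogeneous}: if $n^\delta_{-1}(b)=m-3$, every $\kappa_{ijkr}=-1$. The odd spectrum is then contained in $\{-1,\frac12\}$, the even spectrum in $\{1,-\frac12,0\}$, and these sets are disjoint. This yields \eqref{fusiongrading}; note that $A_{-1}$, $A_{1/2}$, or $A_0$ may be zero when $m=3$. Since $\theta_b$ acts by $\pm1$ on $\alg_\pm(b)$, it is the sign map of this decomposition.

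For the converse: if some $r\in R$ has $\kappa_{ijkr}=1$, then $L_r$ contributes a nonzero eigenvector for the eigenvalue $1$ inside $\alg_-(b)$, while $L_0$ contributes one inside $\alg_+(b)$. Thus $A_1$ meets both parts nontrivially and is not homogeneous. There is no real obstacle in the argument; the only point needing care is the diagonalizability remark that justifies writing each $A_\lambda$ as the direct sum of its blockwise pieces.
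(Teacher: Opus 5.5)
Your argument is correct and follows the paper's proof essentially verbatim: both use the invariant decomposition of Proposition~\ref{prop:blockdecomp}, place $\valg_{\mathcal P_0}\oplus\valg_{\mathcal P_2}$ in the even part and the $\valg_{W_r}$ in the odd part, and observe that the eigenvalue $1$ (and likewise $-\tfrac12$) occurs on both sides exactly when some $\kappa_{ijkr}=1$. Your explicit diagonalizability check is a useful addition that the paper leaves implicit, but it is needed for the eigenspaces to exhaust $\alg_\pm(b)$ in \eqref{fusiongrading}, not for the splitting $A_\lambda=(A_\lambda\cap\alg_+(b))\oplus(A_\lambda\cap\alg_-(b))$, which holds for any operator that preserves the two parts.
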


\begin{proof}
By Proposition~\ref{prop:blockdecomp}, $L_\delta(c_b^\delta)$ preserves
\eqref{Vdecomp} and vanishes on $\valg_{\mathcal P_2}$, and by
\eqref{specL0}, \eqref{specplus} its spectra on the remaining summands
are $\{1,(-\tfrac12)^2\}$ on $\valg_{\mathcal P_0}$ and, on
$\valg_{W_r}$, either $\{1,(-\tfrac12)^2\}$ or $\{-1,(\tfrac12)^2\}$
according as $\kappa_{ijkr}=1$ or $-1$. By \eqref{apriorigrading},
$\valg_{\mathcal P_0}\subseteq\alg_+(b)$ while
$\valg_{W_r}\subseteq\alg_-(b)$ for all $r\in R$. If $\kappa_{ijkr}=1$
for some $r$, the eigenvalue $1$ occurs both in $\alg_+(b)$ and in
$\alg_-(b)$, so $A_1$ is not homogeneous; the same applies to
$A_{-1/2}$. If $\kappa_{ijkr}=-1$ for all $r\in R$, then $A_1$ and
$A_{-1/2}$ lie in $\valg_{\mathcal P_0}$, $A_0=\valg_{\mathcal P_2}$ and
$A_{-1},A_{1/2}$ lie in $\bigoplus_r\valg_{W_r}$, which is
\eqref{fusiongrading}. By \eqref{nminusone1} the latter condition is
$n^\delta_{-1}(b)=|R|=m-3$.
\end{proof}

\begin{proposition}\label{prop:star}
Let $m\ge5$ and $\omega\in\Omega$. Then $\Ax(\delta_\omega)=\St(\omega)$.
\end{proposition}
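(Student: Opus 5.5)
The plan is a direct computation based on Lemma~\ref{lem:starkappa}. By the definition of $\Ax(\delta)$ and \eqref{nminusone1}, a block $b$ with $\supp(b)=\{i,j,k\}$ lies in $\Ax(\delta_\omega)$ exactly when $\kappa_{\delta_\omega}(P_{\{i,j,k,r\}})=-1$ for every $r\in R=\Omega\setminus\{i,j,k\}$. Lemma~\ref{lem:starkappa} turns this into a set-theoretic condition: $\kappa_{\delta_\omega}(P_{\{i,j,k,r\}})=-1$ if and only if $\omega\in\{i,j,k,r\}$. The proof then reduces to checking, for each block, whether $\omega$ lies in $\{i,j,k,r\}$ for all $r\in R$. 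I would split into two cases according to whether $\omega\in\supp(b)$.

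\emph{Case $\omega\in\supp(b)$, i.e.\ $b\in\St(\omega)$.} Then $\omega\in\{i,j,k,r\}$ for every $r\in R$. Hence every Pasch configuration through $b$ is negative, $n^{\delta_\omega}_{-1}(b)=m-3$, and $b\in\Ax(\delta_\omega)$. This gives $\St(\omega)\subseteq\Ax(\delta_\omega)$.

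\emph{Case $\omega\notin\supp(b)$.} Then $\omega\in R$, and the condition $\omega\in\{i,j,k,r\}$ holds only for $r=\omega$. So exactly one of the $m-3$ Pasch configurations through $b$ is negative, and $n^{\delta_\omega}_{-1}(b)=1$. This equals $m-3$ only when $m=4$. Under the hypothesis $m\ge5$ we have $1<m-3$, so $b\notin\Ax(\delta_\omega)$, which gives the reverse inclusion. This is the only place the hypothesis is used. For $m=4$ every block would be an axis, and for $m=3$ the set $R$ is empty, so the condition is vacuous.

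There is no real obstacle here. The only point needing care is the bookkeeping for $m\ge5$. As a consistency check, the counts agree with Theorem~\ref{thm:localpaschspectrum}: blocks in $\St(\omega)$ have spectral type $\sigma_{m-3}$, and all other blocks have type $\sigma_1$.
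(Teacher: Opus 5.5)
Your proof is correct and follows essentially the same route as the paper. Both use Lemma~\ref{lem:starkappa} to show that every Pasch configuration through a block of $\St(\omega)$ is negative, and both use $m\ge5$ to find a positive one when $\omega\notin\supp(b)$. You go slightly further by computing $n^{\delta_\omega}_{-1}(b)=1$ exactly for such blocks, where the paper only exhibits one $x\in\Omega\setminus(\supp(b)\cup\{\omega\})$, and this also shows that the hypothesis $m\ge5$ is sharp.
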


\begin{proof}
The Pasch configurations containing $b$ are indexed by the sets
$\supp(b)\cup\{x\}$, $x\in\Omega\setminus\supp(b)$. If
$\omega\in\supp(b)$, all of them contain $\omega$, hence are negative by
Lemma~\ref{lem:starkappa}, so $n^{\delta_\omega}_{-1}(b)=m-3$. If
$\omega\notin\supp(b)$, choose
$x\in\Omega\setminus(\supp(b)\cup\{\omega\})$, possible because
$m\ge5$; the corresponding configuration is positive, so
$n^{\delta_\omega}_{-1}(b)<m-3$.
\end{proof}

\begin{theorem}\label{thm:vmaxial}
Let $m\ge5$ and $\omega\in\Omega$. Then $\alg^{\delta_\omega}(G_2(m))$ is
an axial algebra with axis set $\{c_b^{\delta_\omega}:b\in\St(\omega)\}$,
the Peirce spectrum of each axis being
$$
\Spec L(c_b)=\bigl\{1,\ (-1)^{m-3},\ (\tfrac12)^{2(m-3)},\
(-\tfrac12)^{2},\ 0^{\binom{m-3}{2}}\bigr\},
$$
and the fusion law being Table~\ref{tab:fusion}, the same at every axis
and independent of $m$.
\end{theorem}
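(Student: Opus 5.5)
The plan is to prove the four ingredients of an axial algebra separately: generation, the spectrum, semisimplicity, and the fusion law. Generation is already done. By Theorem~\ref{thm:vm}(c), $\alg^{\delta_\omega}(G_2(m))$ is generated by the block idempotents $c_b^{\delta_\omega}$ with $b\in\St(\omega)$. By Proposition~\ref{prop:star} these are exactly the blocks in $\Ax(\delta_\omega)$, so each has $n^{\delta_\omega}_{-1}(b)=m-3$.

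\textbf{Spectrum and semisimplicity.} Substituting $k=m-3$ in \eqref{sigmanot} via Theorem~\ref{thm:localpaschspectrum} gives the displayed Peirce spectrum. For semisimplicity, Proposition~\ref{prop:blockdecomp} shows that $L(c_b)$ is block diagonal. Each block $L_0$ or $L_r$ is a symmetric matrix with rational entries. Its characteristic polynomial is $(\lambda\mp1)(\lambda\pm\tfrac12)^2$, and a direct check should give $(L_r-\kappa)(L_r+\tfrac12\kappa)=0$, so $L_r$ is annihilated by a product of distinct linear factors. Hence $L(c_b)$ is diagonalizable over $\fie$ with eigenvalues in $\Lambda$.

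\textbf{Uniformity of the fusion law in $\omega$ and $b$.} Every permutation of $\Omega$ fixing $\omega$ preserves $\delta_\omega$. It therefore induces an automorphism of $\alg^{\delta_\omega}(G_2(m))$ sending block idempotents to block idempotents. This group acts transitively on $\St(\omega)$, so it suffices to verify the fusion law for one axis, say $b=b_{\omega ij}$.

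\textbf{The eigenspaces.} I will write them down explicitly from Proposition~\ref{prop:blockmatrices}.
\begin{itemize}
\item $A_1=\fie c_b$.
\item $A_{-1/2}$ is the sum-zero part of $\valg_{\mathcal P_0}$.
\item $A_0=\valg_{\mathcal P_2}$.
\item For each $r\in R$, $A_{-1}\cap\valg_{W_r}$ is spanned by one signed vector $w_r$.
\item For each $r\in R$, $A_{1/2}\cap\valg_{W_r}$ is the orthogonal complement of $w_r$ in $\valg_{W_r}$.
\end{itemize}

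\textbf{Reduction to $m=5$.} Each product of two spanning eigenvectors involves indices in $\supp(b)\cup\{r,s\}$ or $\supp(b)\cup\{r,s,t,u\}$. By Lemma~\ref{lem:closure} and Corollary~\ref{cor:where}, such a product is computed inside a subalgebra $\valg(T)$ with $|T|\le 7$. The Peirce decomposition of $c_b$ restricts compatibly to $\valg(T)$, because the block decomposition \eqref{Vdecomp} is the same one built from $T\setminus\supp(b)$. I expect every nonzero interaction to involve at most two external indices $r,s$ from $R$: products in $\valg_{\mathcal P_2}$ with disjoint supports vanish, and $W_r\circ W_s$ lands in $e_{rs}$. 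So the fusion table computed for $m=5$ should already be the general one, giving independence of $m$.

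\textbf{Parity constraints.} Theorem~\ref{thm:grading} gives the $\mathbb Z_2$-grading \eqref{fusiongrading}, with $\theta_b$ as an automorphism. This forces the following before any computation:
\begin{itemize}
\item even$\,\circ\,$even $\subseteq\alg_+(b)=A_1\oplus A_{-1/2}\oplus A_0$;
\item even$\,\circ\,$odd $\subseteq A_{-1}\oplus A_{1/2}$;
\item odd$\,\circ\,$odd $\subseteq\alg_+(b)$.
\end{itemize}

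\textbf{Main obstacle: the explicit table.} The remaining step, which I expect to be hardest, is computing the actual entries within these parity constraints for $m=5$. The plan is to use the matrix model of Theorem~\ref{thm:vm} with $u=v_i+v_j$. There $A_1,A_{-1/2}$ live in $\fie\{v_i,v_j,S_{ij}\}$, $A_0$ consists of the matrices supported on the complement of $\{i,j\}$, and $A_{-1}\oplus A_{1/2}$ is spanned by $v_r,S_{ir},S_{jr}$. Products then reduce to the rules \eqref{vmsummands}.

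The delicate cases are the following.
\begin{itemize}
\item $A_{1/2}\circ A_{1/2}$ and $A_{-1}\circ A_{1/2}$. For the same $r$ the product stays in $\valg_{\mathcal P_0}$, so the question is which of $A_1,A_{-1/2}$ it meets. For $r\neq s$ the product lies in $\fie e_{rs}\subseteq A_0$.
\item $A_0\circ A_{\pm1}$ and $A_0\circ A_{\pm 1/2}$. These must be checked to see whether $A_0$ maps $W_r$ into $W_s$ across eigenvalues.
\end{itemize}
I would tabulate these products on one representative pair $(r,s)$ and read off Table~\ref{tab:fusion}.
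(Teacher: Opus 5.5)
The step that fails is your reduction to $m=5$. You claim that every nonzero product of spanning eigenvectors involves at most two indices of $R$. This is false for $A_0\diamond A_0$: for distinct $r,s,t\in R$ one has $e_{rs}\circ_{\delta_\omega}e_{rt}=e_{st}\neq0$ (in the model, $S_{rs}\diamond S_{rt}=S_{st}$), and this requires $|R|\ge3$. At $m=5$ one has $A_0=\fie e_{rs}$, a one-dimensional null algebra. The table you would read off there therefore has $A_0\diamond A_0=0$, and asserting that table for all $m$ is wrong for $m\ge6$; this is exactly the degeneration noted under Table~\ref{tab:fusion}. The fix is to treat this entry separately, as the paper does. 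Since $A_0=\valg_{\mathcal P_2}=\valg(R)$ is a subalgebra by Lemma~\ref{lem:closure}, $A_0\diamond A_0\subseteq A_0$ for every $m$. Alternatively, reduce to $m=6$, which suffices because every other entry really does involve at most two indices of $R$. The rest of your setup is correct and matches the paper's route. Your check $4L_r^2=2\,\Id+2\kappa L_r$ does give $(L_r-\kappa)(L_r+\tfrac12\kappa)=0$, hence semisimplicity. Transitivity of the stabiliser of $\omega$ on $\St(\omega)$ is a cleaner justification of ``the same at every axis'' than the paper's relabelling remark.

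The second shortfall is that the computations carrying the content are only promised, and your list of delicate cases is off. Several entries need no products beyond parity or subspace arguments:
\begin{enumerate}
\item[(i)] $A_{1/2}\diamond A_{1/2}$, $A_0\diamond A_{-1}$ and $A_0\diamond A_{1/2}$ already equal the full parity bound.
\item[(ii)] $A_0\diamond A_1=A_0\diamond A_{-1/2}=0$ follows from Proposition~\ref{prop:blockdecomp}.
\item[(iii)] $A_{-1/2}\diamond A_{-1/2}\subseteq A_1\oplus A_{-1/2}$ holds because $\valg_{\mathcal P_0}$ is a subalgebra.
\end{enumerate}
Three inclusions are strictly smaller than the parity bound and need explicit products: $A_{-1/2}\diamond A_{-1}\subseteq A_{1/2}$, $A_{-1}\diamond A_{-1}\subseteq A_1\oplus A_0$, and $A_{-1}\diamond A_{1/2}\subseteq A_{-1/2}\oplus A_0$. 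You flag only the last. With $b=b_{\omega ij}$, put $g=v_i+v_j+S_{ij}$, $a_1=v_i-v_j$, $a_2=v_i-S_{ij}$, $w_r=v_r-S_{ir}-S_{jr}$, $p_r=v_r+S_{ir}$ and $q_r=v_r+S_{jr}$. The three inclusions follow from
\begin{itemize}
\item $a_1\diamond w_r=q_r-p_r$ and $a_2\diamond w_r=q_r$;
\item $w_r\diamond w_r=2g$ and $w_r\diamond w_s=S_{rs}$;
\item $w_r\diamond p_r=a_2-a_1$, $w_r\diamond q_r=a_2$, and $w_r\diamond p_s=w_r\diamond q_s=-2S_{rs}$ for $r\neq s$.
\end{itemize}
All of these are computed for generic $r,s$, so they hold for every $m\ge5$.
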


\begin{table}[h]
\centering
\renewcommand{\arraystretch}{1.2}
\begin{tabular}{c|cc|c|cc}
$\diamond$ & $1$ & $-\tfrac12$ & $0$ & $-1$ & $\tfrac12$\\
\midrule
$1$          & $1$ & $-\tfrac12$ & $\varnothing$ & $-1$ & $\tfrac12$\\
$-\tfrac12$  & $-\tfrac12$ & $1,-\tfrac12$ & $\varnothing$ & $\tfrac12$ & $-1,\tfrac12$\\
\midrule
$0$          & $\varnothing$ & $\varnothing$ & $0$ & $-1,\tfrac12$ & $-1,\tfrac12$\\
\midrule
$-1$         & $-1$ & $\tfrac12$ & $-1,\tfrac12$ & $1,0$ & $-\tfrac12,0$\\
$\tfrac12$   & $\tfrac12$ & $-1,\tfrac12$ & $-1,\tfrac12$ & $-\tfrac12,0$ & $1,-\tfrac12,0$\\
\end{tabular}
\smallskip
\caption{The fusion law of a block axis. The eigenvalues are ordered so that the even part of
\eqref{fusiongrading} occupies the diagonal blocks and the odd part the
off-diagonal ones. For $m=5$ the entry $A_0\diamond A_0$ degenerates
to $0$.}
\label{tab:fusion}
\end{table}

\begin{proof}

The spectrum is \eqref{localspectrumformula} with
$n^{\delta_\omega}_{-1}(b)=m-3$, by Proposition~\ref{prop:star}, and
generation is Theorem~\ref{thm:vm}(c). It remains to establish
Table~\ref{tab:fusion}, which we do in the model $\vm{\ualg}$ of
Theorem~\ref{thm:vm}. Fix $b=b_{\omega ij}$ and put
\begin{equation}\label{gdef}
W:=\Omega'\setminus\{i,j\},
\qquad
g:=v_i+v_j+S_{ij},
\qquad
c=c^{\delta_\omega}_b=-\tfrac12 g ,
\end{equation}
and $A_\lambda:=\ker\bigl(L(c)-\lambda\,\Id\bigr)$.

\emph{Step 1: the Peirce decomposition.} Applying \eqref{vmproduct} to
the basis vectors gives
$$
c\diamond v_i=\tfrac12 v_j+\tfrac12 S_{ij},
\qquad
c\diamond v_j=\tfrac12 v_i+\tfrac12 S_{ij},
\qquad
c\diamond S_{ij}=\tfrac12 v_i+\tfrac12 v_j,
$$
and, for $r,s\in W$,
$$
c\diamond v_r=\tfrac12(S_{ir}+S_{jr}),
\quad
c\diamond S_{ir}=\tfrac12 v_r-\tfrac12 S_{jr},
\quad
c\diamond S_{jr}=\tfrac12 v_r-\tfrac12 S_{ir},
\quad
c\diamond S_{rs}=0 .
$$
Hence $L(c)$ preserves
\begin{equation}\label{Ldecomp}
\vm{\ualg}
=
\fie\{v_i,v_j,S_{ij}\}
\oplus
\bigoplus_{r\in W}\fie\{v_r,S_{ir},S_{jr}\}
\oplus
\fie\{S_{rs}:r,s\in W\},
\end{equation}
acting on the first summand by the matrix \eqref{L0matrix}, on the
$r$-th middle summand by
$\tfrac12\left(\begin{smallmatrix}0&1&1\\1&0&-1\\1&-1&0\end{smallmatrix}\right)$,
of characteristic polynomial $-(\lambda-\tfrac12)^2(\lambda+1)$, and by
zero on the last. Setting
\begin{equation}\label{apq}
\begin{aligned}
a_1&:=v_i-v_j,\\
a_2&:=v_i-S_{ij},\\
w_r&:=v_r-S_{ir}-S_{jr},\\
p_r&:=v_r+S_{ir},\\
q_r&:=v_r+S_{jr},
\end{aligned}
\end{equation}
we obtain
\begin{equation}\label{peircebasis}
\begin{aligned}
&A_1=\fie\{g\},
&&
A_{-1/2}=\fie\{a_1,a_2\},
&&
\\
&A_{-1}=\fie\{w_r:r\in W\},
&&
A_{1/2}=\fie\{p_r,q_r:r\in W\},\\
&A_0=\fie\{S_{rs}:r,s\in W\}.
\end{aligned}
\end{equation}
The dimensions $1,2,m-3,2(m-3),\binom{m-3}{2}$ agree with the spectrum,
and \eqref{peircebasis} refines \eqref{fusiongrading}. We shall use the
inverse relations
\begin{equation}\label{inverserel}
S_{ij}=\tfrac13\bigl(g+a_1-2a_2\bigr),
\qquad
v_j=\tfrac13\bigl(g-2a_1+a_2\bigr).
\end{equation}

\emph{Step 2: entries involving $A_1$.} Since $A_1=\fie\{c\}$ and
$c\diamond z=\lambda z$ for $z\in A_\lambda$, one has
$A_1\diamond A_\lambda\subseteq A_\lambda$ for $\lambda\neq0$ and
$A_1\diamond A_0=0$, which is the first row and column of
Table~\ref{tab:fusion}.

\emph{Step 3: $A_0\diamond A_0$.} By \eqref{peircebasis},
$A_0=\Sym_0\bigl(\fie\{v_r:r\in W\}\bigr)$, a subalgebra of
$(\Sym_0(\ualg),\diamond)$ by Proposition~\ref{prop:evenpart}. Hence
$A_0\diamond A_0\subseteq A_0$.

\emph{Step 4: the entries in the $A_{-1/2}$ row.} A direct computation
from \eqref{vmproduct} gives
$$
a_1\diamond a_1=2S_{ij},
\qquad
a_1\diamond a_2=-a_1+S_{ij},
\qquad
a_2\diamond a_2=2v_j ,
$$
so by \eqref{inverserel} all three lie in $A_1\oplus A_{-1/2}$, and
$A_{-1/2}\diamond A_{-1/2}\subseteq A_1\oplus A_{-1/2}$. Next, for
$r,s\in W$,
$$
a_1\diamond S_{rs}=a_2\diamond S_{rs}=0,
$$
whence $A_{-1/2}\diamond A_0=0$. Finally
$$
a_1\diamond w_r=q_r-p_r,
\qquad
a_2\diamond w_r=q_r ,
$$
so $A_{-1/2}\diamond A_{-1}\subseteq A_{1/2}$.

\emph{Step 5: the entries in the $A_{-1}$ row.} For $r\in W$ write
$w_r=(x,X)$ with $x=v_r$ and $X=-(S_{ir}+S_{jr})$; then $Xx=-(v_i+v_j)$,
$\pi(X^{2})=S_{ij}$ and $\pi(xx^{T})=0$, so that \eqref{vmproduct} gives
$$
w_r\diamond w_r=\bigl(2(v_i+v_j),\,2S_{ij}\bigr)=2g\in A_1 .
$$
For $r\neq s$, writing $w_s=(y,Y)$ with $y=v_s$,
$Y=-(S_{is}+S_{js})$, one has $Xy=Yx=0$, $XY+YX=2S_{rs}$ and
$xy^{T}+yx^{T}=S_{rs}$, whence
$$
w_r\diamond w_s=\bigl(0,\pi(2S_{rs}-S_{rs})\bigr)=S_{rs}\in A_0 .
$$
Therefore $A_{-1}\diamond A_{-1}\subseteq A_1\oplus A_0$. Moreover
$$
w_r\diamond p_r=a_2-a_1,
\qquad
w_r\diamond q_r=a_2,
\qquad
w_r\diamond p_s=w_r\diamond q_s=-2S_{rs}\ \ (r\neq s),
$$
so that $A_{-1}\diamond A_{1/2}\subseteq A_{-1/2}\oplus A_0$.

\emph{Step 6: the remaining entries.} By Theorem~\ref{thm:grading} the
decomposition \eqref{fusiongrading} is a $\mathbb Z_2$-grading, so
\begin{align}
\begin{aligned}
A_{-1/2}\diamond A_{1/2},
\
A_{0}\diamond A_{-1},
\
A_{0}\diamond A_{1/2}
\ &\subseteq\
A_{-1}\oplus A_{1/2},\\
A_{1/2}\diamond A_{1/2}
\ &\subseteq\
A_1\oplus A_{-1/2}\oplus A_0 ,
\end{aligned}
\end{align}
which are exactly the corresponding entries of Table~\ref{tab:fusion}.
Together with Steps 2--5 this establishes every entry.

None of the computations depends on the choice of $b\in\St(\omega)$
beyond relabelling of $i,j$ and $W$, and each product involves only the
indices $i,j$ and at most two elements of $W$; hence the law is the same
at every axis and independent of $m$. For $m=5$ one has $|W|=2$, so
$A_0$ is one-dimensional and $A_0\diamond A_0=0$, consistent with Step 3.
\end{proof}

\begin{corollary}\label{cor:evenpartclifford}
For $b=b_{\omega ij}\in\St(\omega)$, the even part
$\alg_+(b)=A_1\oplus A_{-1/2}\oplus A_0$ of \eqref{fusiongrading} splits
as a direct product of algebras
$$
\alg_+(b)\;\cong\;\vm{\mathbb R^2}\ \times\ \alg^1(G_2(m-3)),
$$
where the first factor, spanned by $g,a_1,a_2$ (see \eqref{gdef}, \eqref{apq}), is the  polar
algebra of Proposition~\ref{prop:evenpart} built from the rank-$1$
symmetric Clifford system on $\mathbb R^2$, and the second is the
even-part incidence algebra of Corollary~\ref{cor:selfsim2}, with $m-3$ in place of $m$. Thus for $m\ge5$, $\alg_+(b)$ always has a direct summand which is a polar algebra.
This second factor is nontrivial for $m\ge6$; at the boundary
case $m=5$ it degenerates to a $1$-dimensional null algebra.
\end{corollary}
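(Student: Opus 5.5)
The plan is to identify the two factors concretely inside the model $\vm{\ualg}$ of Theorem~\ref{thm:vm}, show that each is a subalgebra of the stated type, and then show that the two factors annihilate each other. Fix $b=b_{\omega ij}$ and $W=\Omega'\setminus\{i,j\}$ as in the proof of Theorem~\ref{thm:vmaxial}.

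\emph{First factor.} By \eqref{peircebasis} and \eqref{inverserel}, $A_1\oplus A_{-1/2}=\fie\{g,a_1,a_2\}$ coincides with $\fie\{v_i,v_j,S_{ij}\}$. The inclusion $\supseteq$ follows from \eqref{inverserel} together with $v_i=a_2+S_{ij}$. The reverse inclusion is clear, and both spaces are $3$-dimensional. Under the isomorphism \eqref{vmiso}, this space is $\valg(T)$ for $T=\{\omega,i,j\}$, spanned by $e_{\omega i},e_{\omega j},e_{ij}$. By Lemma~\ref{lem:closure}, $\valg(T)$ is a subalgebra isomorphic to $\alg^{\delta_T}(G_2(3))$. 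The restriction $\delta_T$ is the star decoration at $\omega$ on the single block $b$, so Theorem~\ref{thm:vm} with $m=3$ identifies this subalgebra with $\vm{\mathbb R^2}$. Alternatively, one can read this off directly, since the restriction of \eqref{vmproduct} to $\fie\{v_i,v_j\}\oplus\fie\{S_{ij}\}$ is literally the product of $\vm{\fie\{v_i,v_j\}}$. Example~\ref{ex:polar3} then shows that this algebra is the polar algebra built from the rank-one symmetric Clifford system.

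\emph{Second factor.} By \eqref{peircebasis}, $A_0=\fie\{S_{rs}:r,s\in W\}$. Under \eqref{vmiso} this is $\valg(W)$ with $|W|=m-3$. No block contained in $W$ has $\omega$ in its support, so $\delta_\omega|_W\equiv\mathbf 1$. Lemma~\ref{lem:closure}, or equivalently Proposition~\ref{prop:evenpart} applied to $\fie\{v_r:r\in W\}$, then gives $A_0\cong\alg^{\mathbf 1}(G_2(m-3))$.

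\emph{Orthogonality of the factors.} It remains to show that the cross products vanish. For any $p\in\{v_i,v_j,S_{ij}\}$ and $r,s\in W$, \eqref{vmsummands} gives $p\diamond S_{rs}=0$, since the index sets $\{i,j\}$ and $\{r,s\}$ are disjoint. This is also the content of the entries $A_1\diamond A_0=A_{-1/2}\diamond A_0=\varnothing$ in Table~\ref{tab:fusion}. Hence $\alg_+(b)=\valg(T)\oplus\valg(W)$, with the two summands mutually annihilating subalgebras, which is exactly a direct product of algebras. A dimension count confirms this, since $3+\binom{m-3}{2}=\dim\alg_+(b)$ by \eqref{apriorigrading}.

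\emph{Boundary cases.} For $m=5$ we have $|W|=2$, so $A_0=\fie\{S_{rs}\}$ and $S_{rs}\diamond S_{rs}=0$; this is a $1$-dimensional null algebra. For $m\ge6$ the set $W$ contains a triple, so $A_0$ contains a block and its product is nonzero.

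I expect no serious obstacle. The only point needing care is matching $\fie\{g,a_1,a_2\}$ with the coordinate subalgebra $\valg(\{\omega,i,j\})$, which rests on the inverse relations \eqref{inverserel}.
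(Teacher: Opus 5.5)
Your proposal is correct and follows essentially the same route as the paper. You identify $A_1\oplus A_{-1/2}=\fie\{v_i,v_j,S_{ij}\}$ as the copy of $\vm{\mathbb R^2}$ and $A_0=\Sym_0(\fie\{v_r:r\in W\})\cong\alg^{\mathbf 1}(G_2(m-3))$, then check that the cross products vanish. The only difference is cosmetic. You get the subalgebra property and the mutual annihilation from Lemma~\ref{lem:closure} and index-disjointness, which also makes explicit that $A_1\oplus A_{-1/2}$ is closed under the product. The paper leaves that closure implicit and instead cites Steps~2--4 of the fusion-law computation.
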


\begin{proof}
By Step~2 of the proof of Theorem~\ref{thm:vmaxial}, $A_1\diamond A_0=0$,
and by Step~4, $A_{-1/2}\diamond A_0=0$; together with
$A_0\diamond A_0\subseteq A_0$ (Step~3) this shows that $A_0$ is an ideal
of $\alg_+(b)$ on which $A_1\oplus A_{-1/2}$ acts trivially, so
$\alg_+(b)=(A_1\oplus A_{-1/2})\times A_0$ as algebras. The factor
$A_1\oplus A_{-1/2}$ equals $\fie\{g,a_1,a_2\}=\fie\{v_i,v_j,S_{ij}\}$, with the
product inherited from $\vm{\ualg}$. It is exactly the subalgebra
$\vm{\mathbb R^2}\subset\vm{\ualg}$ associated with
$\fie\{v_i,v_j\}\subset\ualg$, hence a  polar algebra by
Proposition~\ref{prop:evenpart}; see also Example~\ref{ex:polar3}. By the same proposition,
$A_0=\Sym_0(\fie\{v_r:r\in W\})\cong\alg^1(G_2(m-3))$. If $m=5$, then
$|W|=2$, so $A_0=\fie\{S_{rs}\}$ for the single pair $r,s\in W$, and
$S_{rs}\diamond S_{rs}=\pi\bigl(2S_{rs}^2\bigr)=\pi\bigl(2(e_re_r^\top+e_se_s^\top)\bigr)=0$
since $\pi$ annihilates diagonal matrices; thus $A_0$ is $1$-dimensional
and null. If $m\ge6$ then $|W|\ge3$, so $G_2(|W|)$ has blocks and
$S_{rs}\diamond S_{rt}=S_{st}\neq0$ for distinct $r,s,t\in W$, so $A_0$
is nontrivial.
\end{proof}

\section{Distribution of spectral types}\label{sec:dist}

Theorem~\ref{thm:localpaschspectrum} reduces the spectral analysis of an individual block idempotent to the single local parameter $n_{-1}^{\delta}(b)$. This naturally leads to a global question: how are these local spectral types distributed throughout the geometry? The purpose of this section is to study the resulting distribution of spectral types and the associated spectral profile.

The next two definitions make sense for an arbitrary decorated
partial Steiner triple system.

\begin{definition}
For a decorated PSTS $(\mathcal P,\mathcal B,\delta)$, let
\begin{equation}\label{Ndelta}
\mathcal N_\delta
=
\{
P:\kappa_\delta(P)=-1
\}
\end{equation}
be the collection of \emph{negative Pasch configurations} of the
decorated system, and, for a block $b\in\mathcal B$, let
\begin{equation}\label{degreegeneral}
d_\delta(b)
=
|\{P\in\mathcal N_\delta:\ b\in P\}|
\end{equation}
denote the number of negative Pasch configurations containing $b$.
\end{definition}

\begin{definition}
For each integer \(k\ge 0\), let
\begin{equation}\label{Tdelta}
T_k^\delta
=
\{\,b\in\mathcal B:d_\delta(b)=k\,\}.
\end{equation}
The vector
\begin{equation}\label{Tdelta1}
\mathbf T_\delta
=
\bigl(|T_0^\delta|,|T_1^\delta|,\dots\bigr)
\end{equation}
is called the \emph{spectral profile} of the decorated partial Steiner
triple system \((\mathcal P,\mathcal B,\delta)\).
\end{definition}

We now specialize to the Grassmannian geometry $G_2(m)$.
By
Theorem~\ref{thm:localpaschspectrum}, the multiplicity of the eigenvalue
$-1$ of the block idempotent $c_b^\delta$ is precisely the number of
negative local Pasch configurations through the chosen block;
equivalently,
\begin{equation}\label{neqd}
d_\delta(b)
=
n_{-1}^{\delta}(b)
=
\dim A_{-1}(c_b^\delta),
\end{equation}
where $A_{-1}(c_b^\delta)$ denotes the $(-1)$-eigenspace of
$L_{\delta}(c_b^\delta)$. Consequently, in the notation \eqref{sigmanot},
$$
T_k^\delta
=
\{
b\in\mathcal B:
\Spec L_\delta(c_b^\delta)=\sigma_k
\},
$$
so that $T_k^\delta$ is the class of all blocks whose associated block
idempotents have spectral type $\sigma_k$, and the spectral profile
records the distribution of these types throughout the geometry.

Since every Pasch configuration of $G_2(m)$ is uniquely determined by a
$4$-subset of $\Omega$, $\mathcal N_\delta$ is identified with
\begin{equation}\label{Ndeltagrass}
\mathcal N_\delta
=
\{\rho\in\binom{\Omega}{4}:
\kappa_\delta(P_\rho)=-1\}.
\end{equation}
The elements of $\mathcal N_\delta$ are called \emph{negative Pasch configurations}. For a block
$b\in\mathcal B$ with support $\supp(b) = \{i,j,k\}$ and for
$r\in\Omega\setminus\{i,j,k\}$, write
$$
\kappa_{b,r}
:=
\kappa_{ijkr}.
$$
Rewriting the definition \eqref{neqd} in these notations yields
$d_\delta(b) = |\{r\in\Omega\setminus\{i,j,k\}:\kappa_{b,r}=-1\}|$, from which it is apparent that $\kappa_{b,r}=-1$ precisely when $\rho=\supp(b)\cup\{r\}$ belongs to $\mathcal N_\delta$. It follows that
\begin{equation}\label{degreeformula}
d_\delta(b)
=
|\{\rho\in\mathcal N_\delta:\supp(b)\subset\rho\}|.
\end{equation}
Equation~\eqref{degreeformula} suggests viewing negative Pasch configurations as
hyperedges and blocks as vertices.

\begin{definition}
The \emph{negative Pasch hypergraph} of the decorated Grassmannian
geometry $(G_2(m),\delta)$ is the $4$-uniform hypergraph
\begin{equation}
\mathscr H_\delta
=
\bigl(
\mathcal B,
\mathcal N_\delta
\bigr),
\end{equation}
whose vertex set $\mathcal B$ is the set of blocks of $G_2(m)$ and whose hyperedges
are the negative Pasch configurations \eqref{Ndeltagrass}, a hyperedge $\rho$ being
incident with a block $b$ whenever $\supp(b)\subset\rho$.
\end{definition}

Each hyperedge $\rho=\{i,j,k,r\}\in\binom{\Omega}{4}$ is incident with
precisely the four blocks whose support is a $3$-subset of $\rho$,
namely those with support
\begin{equation}
\{i,j,k\},\qquad \{i,j,r\},\qquad \{i,k,r\},\qquad \{j,k,r\},
\end{equation}
so that \eqref{degreeformula} says that $d_\delta(b)$ is exactly
the degree of the vertex $b$ in $\mathscr H_\delta$. Thus
$d_\delta(b)$ carries three equivalent interpretations: a spectral
one, by \eqref{neqd}; a geometric one, as the number of negative Pasch
configurations through $b$; and a hypergraph-theoretic one, as a
vertex degree. In particular, since $|T_k|$ is the number of vertices of
degree $k$ in $\mathscr H_\delta$, the spectral profile
\begin{equation}
\mathbf T_\delta
=
\bigl(|T_0|,|T_1|,\dots,|T_{m-3}|\bigr)
\end{equation}
coincides with the degree distribution of the hypergraph
$\mathscr H_\delta$.
A restriction on the possible spectral profiles is given by the
following binomial moment identities.

In the Grassmannian geometry $G_2(m)$, two Pasch configurations
$P_\rho$ and $P_{\rho'}$ contain a common block if and only if $|\rho\cap\rho'|=3$, in which case $\rho\cap\rho'$ is the support of the common block.
More generally, collections of negative Pasch configurations sharing a
common block lead to the following family of overlap invariants.

\begin{definition}\label{deltageneral}
For $p\ge 0$, define
\begin{align}\label{deltageneraldefined}
q_\delta^{(p)}
=
\Bigl|
\Bigl\{
S\subseteq\mathcal N_\delta:
|S|=p,
\quad
\Bigl|\bigcap_{\rho\in S}\rho\Bigr|\geq 3
\Bigr\}
\Bigr|.
\end{align}
When $p = 0$ it is understood that the intersection condition in \eqref{deltageneraldefined} is satisfied vacuously, so $q_\delta^{(0)}
=
\binom{m}{3}$, while when $p = 1$, $q_\delta^{(1)}=4|\mathcal N_\delta|$. When $p \geq 2$, then the condition $\geq 3$ in \eqref{deltageneraldefined} can be replaced by equality, as the $\rho$ corresponding to two distinct Pasch configurations cannot coincide.
\end{definition}

Thus $q_\delta^{(p)}$ counts unordered collections of $p$ negative
Pasch configurations sharing a common block. The cases $p= 0$, $p= 1$, and $p =2$ correspond respectively to the total number of blocks, the handshaking
identity, and pairs of negative Pasch configurations sharing a common
block.

\begin{theorem}\label{thm:highermoments}
For every decoration $\delta$ of $G_2(m)$ and every integer $0\le p\le m-3$, there holds
\begin{equation}\label{momentgeneral}
\sum_{i=0}^{m-3}\binom{i}{p}|T_i|
=
q_\delta^{(p)}.
\end{equation}
In particular,
\begin{align}
&\sum_{k=0}^{m-3}|T_k|=
\binom{m}{3},\label{Ncount}\\
&\sum_{k=1}^{m-3}k\,|T_k|=
4|\mathcal N_\delta|,
\label{Handshake}\\
&\sum_{k=2}^{m-3}k(k-1)|T_k|=
2q_\delta^{(2)}.
\end{align}
\end{theorem}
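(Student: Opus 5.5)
The plan is a double count of the set of incident pairs
$$
\mathcal I_p:=\Bigl\{(b,S):\ b\in\mathcal B,\ S\subseteq\mathcal N_\delta,\ |S|=p,\ \supp(b)\subset\rho\ \text{for all }\rho\in S\Bigr\},
$$
that is, pairs consisting of a block and an unordered $p$-set of negative Pasch configurations all containing that block.

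\emph{Counting by blocks.} Fix $b$. By \eqref{degreeformula} exactly $d_\delta(b)$ members of $\mathcal N_\delta$ contain $\supp(b)$, so the number of admissible $S$ is $\binom{d_\delta(b)}{p}$. By \eqref{neqd} we have $d_\delta(b)=n^\delta_{-1}(b)\in\{0,\dots,m-3\}$ (Corollary~\ref{cor:realizable}). Grouping blocks according to the classes $T_i^\delta$ therefore gives
$$
|\mathcal I_p|=\sum_{i=0}^{m-3}\binom{i}{p}|T_i|.
$$

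\emph{Counting by $S$.} The key step is to show that each $S$ counted in $q^{(p)}_\delta$ arises from exactly one block, and that every $S$ with $|S|=p$ occurring in some pair $(b,S)\in\mathcal I_p$ is counted in $q^{(p)}_\delta$.

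For $p\ge2$, the $\rho\in S$ are distinct $4$-sets, so $|\bigcap_{\rho\in S}\rho|\le3$. Hence a block $b$ with $\supp(b)\subset\bigcap S$ exists if and only if $|\bigcap S|=3$, and then $b$ is unique, with $\supp(b)=\bigcap S$. This uses that $\supp$ is a bijection $\mathcal B\to\binom{\Omega}{3}$.

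For $p=1$, $S=\{\rho\}$ contributes the four blocks with support a $3$-subset of $\rho$. Here one must read $q^{(1)}_\delta$ with the convention stated in Definition~\ref{deltageneral}, namely $q^{(1)}_\delta=4|\mathcal N_\delta|$, counting multiplicity. For $p=0$, $S=\emptyset$ pairs with every block, giving $\binom m3=q^{(0)}_\delta$.

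Equating the two counts yields \eqref{momentgeneral}.

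\emph{Special cases.} The identities \eqref{Ncount} and \eqref{Handshake} are the cases $p=0,1$. The third identity is the case $p=2$ multiplied by $2$, since $2\binom k2=k(k-1)$; the summation range can start at $k=p$ because $\binom ip=0$ for $i<p$.

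\emph{Main obstacle.} The only delicate point is the case $p=1$. There the literal set-count in \eqref{deltageneraldefined} would give $|\mathcal N_\delta|$, not $4|\mathcal N_\delta|$, so the proof must invoke the stated convention, or equivalently interpret $q^{(p)}_\delta$ as counting pairs $(S,b)$ with $\supp(b)\subseteq\bigcap S$. Under that reading the double count is uniform in $p$.
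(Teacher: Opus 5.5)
Your proof is correct and is essentially the paper's argument: sum $\binom{d_\delta(b)}{p}$ over blocks, then identify the total with $q_\delta^{(p)}$ by the cases $p=0$, $p=1$ (each negative Pasch configuration contains four blocks) and $p\ge2$ (the common block is unique, with support $\bigcap S$). You are right that $p=1$ relies on the stated convention $q_\delta^{(1)}=4|\mathcal N_\delta|$ rather than the literal set count. The same caveat applies at $p=0$, where the literal count of $0$-subsets would be $1$ rather than $\binom{m}{3}$.
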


\begin{proof}
For a block $b\in\mathcal B$, the degree $d_\delta(b)$ equals the number of negative Pasch configurations containing $b$. Hence $$ \binom{d_\delta(b)}{p} $$ counts unordered collections of $p$ distinct negative Pasch configurations containing $b$ (and vanishes if $d_\delta(b)<p$). Summing over all blocks yields $$ \sum_{b\in\mathcal B} \binom{d_\delta(b)}{p} = \sum_{i=0}^{m-3} \binom{i}{p}|T_i|, $$ since each block in $T_i$ contributes exactly $\binom{i}{p}$ such collections.
For $p=0$, this is simply the number of blocks,
$\binom{m}{3}.$
For $p=1$, each negative Pasch contains exactly four blocks, hence
$$
\sum_{b\in\mathcal B}d_\delta(b)
=
4|\mathcal N_\delta|.
$$
Assume now that $p\ge2$. Since each block of $G_2(m)$ is a $3$-subset
of $\Omega$, any collection of $p$ negative Pasch configurations
containing a common block determines a unique element of
$$
\Bigl\{
S\subseteq\mathcal N_\delta:
|S|=p,
\quad
\Bigl|\bigcap_{\rho\in S}\rho\Bigr|=3
\Bigr\}.
$$
Conversely, every such collection contributes exactly once. Therefore
$$
\sum_{b\in\mathcal B}
\binom{d_\delta(b)}{p}
=
q_\delta^{(p)},
$$
which proves \eqref{momentgeneral}.
\end{proof}

\begin{corollary}\label{cor:spectraloverlap}
The spectral profile is uniquely determined by the overlap invariants
\begin{align}\label{overlapinvariants}
q_\delta^{(0)},
q_\delta^{(1)},
\dots,
q_\delta^{(m-3)},
\end{align}
via the explicit inversion formulas
\begin{equation}\label{momentgeneralinverse}
|T_i|
=
\sum_{p=i}^{m-3}
(-1)^{p-i}
\binom{p}{i}
q_\delta^{(p)},
\qquad
0\le i\le m-3.
\end{equation}
\end{corollary}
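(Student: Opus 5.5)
The plan is to treat \eqref{momentgeneral} as a linear system in the unknowns $|T_0|,\dots,|T_{m-3}|$ and invert it by binomial inversion. Theorem~\ref{thm:highermoments} gives, for each $0\le p\le m-3$,
\[
q_\delta^{(p)}=\sum_{i=p}^{m-3}\binom{i}{p}|T_i| ,
\]
where the lower summation limit may be raised to $i=p$ because $\binom{i}{p}=0$ for $i<p$. The coefficient matrix $\bigl(\binom{i}{p}\bigr)_{0\le p,i\le m-3}$ is upper unitriangular. It is therefore invertible over $\mathbb Z$, which already yields uniqueness: the vector $\mathbf T_\delta$ is determined by $q_\delta^{(0)},\dots,q_\delta^{(m-3)}$. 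Note also that every block lies in at most $m-3$ Pasch configurations, so $|T_i|=0$ for $i>m-3$, and the system involves exactly these $m-2$ unknowns.

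For the explicit formula, I will substitute the moment identity into the right-hand side of \eqref{momentgeneralinverse} and exchange the order of summation:
\[
\sum_{p=i}^{m-3}(-1)^{p-i}\binom{p}{i}q_\delta^{(p)}
=\sum_{l=i}^{m-3}|T_l|\sum_{p=i}^{l}(-1)^{p-i}\binom{p}{i}\binom{l}{p}.
\]
The inner sum is then simplified using the subset-of-a-subset identity $\binom{l}{p}\binom{p}{i}=\binom{l}{i}\binom{l-i}{p-i}$. After this substitution the inner sum becomes
\[
\binom{l}{i}\sum_{t=0}^{l-i}(-1)^t\binom{l-i}{t}=\binom{l}{i}(1-1)^{l-i}=\delta_{l,i},
\]
so the whole expression collapses to $|T_i|$.

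The only point needing care, and the only mild obstacle, is the bookkeeping of summation ranges. The truncation at $m-3$ must be consistent on both sides, and this is guaranteed by the vanishing $|T_l|=0$ for $l>m-3$ noted above. I also need to confirm that the boundary conventions for $q_\delta^{(0)}$ and $q_\delta^{(1)}$ in Definition~\ref{deltageneral} agree with the moment identity for $p=0,1$; this was already established in Theorem~\ref{thm:highermoments}, so nothing further is required there.
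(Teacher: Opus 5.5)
Your proposal is correct and follows essentially the same route as the paper: substitute the moment identity of Theorem~\ref{thm:highermoments}, interchange the sums, apply $\binom{l}{p}\binom{p}{i}=\binom{l}{i}\binom{l-i}{p-i}$, and collapse the inner alternating sum to $(1-1)^{l-i}$. Your remark that the coefficient matrix $\bigl(\binom{i}{p}\bigr)$ is upper unitriangular gives uniqueness directly, whereas the paper obtains it only implicitly from the explicit inversion formula.
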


\begin{proof}
By Theorem~\ref{thm:highermoments}, $q_\delta^{(p)}=\sum_j\binom{j}{p}|T_j|$.
Hence, for $0\le i\le m-3$,
\begin{align*}
\sum_{p=i}^{m-3}(-1)^{p-i}\binom{p}{i}q_\delta^{(p)}
&=
\sum_{j=0}^{m-3}|T_j|
\sum_{p=i}^{m-3}(-1)^{p-i}\binom{p}{i}\binom{j}{p}
\\
&=
\sum_{j=i}^{m-3}|T_j|\binom{j}{i}
\sum_{p=i}^{j}(-1)^{p-i}\binom{j-i}{p-i}
\\
&=
\sum_{j=i}^{m-3}|T_j|\binom{j}{i}
\sum_{k=0}^{j-i}(-1)^{k}\binom{j-i}{k}
\\
&=
\sum_{j=i}^{m-3}|T_j|\binom{j}{i}(1-1)^{j-i}
\;=\;
|T_i| .
\end{align*}
In the second equality there is used $\binom{j}{p}=0$ for $p>j$, which both
removes the terms with $j<i$ and truncates the inner sum at $p=j$,
together with the identity
$\binom{j}{p}\binom{p}{i}=\binom{j}{i}\binom{j-i}{p-i}$; in the third equality there is
substituted $k=p-i$. Each summand in the last sum vanishes except when $j=i$, in which case it
equals $1$.
\end{proof}

Corollary \ref{cor:spectraloverlap} shows that the spectral profile and the overlap invariants contain the same information: the
local degree distribution can be reconstructed entirely from the overlap
statistics of negative Pasch configurations. In practice the inversion
proceeds triangularly from the top degree downward, since the highest
overlap moments receive contributions from the highest spectral classes
only. Indeed,
$$
q_\delta^{(m-3)}
=
|T_{m-3}|,
$$
while
$$
q_\delta^{(m-4)}
=
|T_{m-4}|
+
(m-3)|T_{m-3}|,
$$
whence
$$
|T_{m-4}|
=
q_\delta^{(m-4)}
-
(m-3)q_\delta^{(m-3)}.
$$
At the other end of the range,
$$
|T_0|
=
q_\delta^{(0)}
-
q_\delta^{(1)}
+
q_\delta^{(2)}
-
q_\delta^{(3)}
+\cdots,
\qquad
|T_1|
=
q_\delta^{(1)}
-
2q_\delta^{(2)}
+
3q_\delta^{(3)}
-\cdots.
$$
The difficulty therefore lies not in reconstructing the profile from the
moments, but rather in understanding which collections of invariants
$$
q_\delta^{(0)},
q_\delta^{(1)},
\dots,
q_\delta^{(m-3)}
$$
can actually occur for decorations of $G_2(m)$.

\begin{remark}
The inversion formula allows one to reconstruct the spectral profile
from the overlap invariants. In the smallest case $m=4$, one has
$q_\delta^{(0)}=4$ and $q_\delta^{(1)}=4|\mathcal N_\delta|,$
 therefore
$$
|T_1|
=
4|\mathcal N_\delta|,
\qquad
|T_0|
=
4(1-|\mathcal N_\delta|).
$$
Since the profile coordinates must be nonnegative, it follows that
$$
|\mathcal N_\delta|\in\{0,1\},
$$
yielding two spectral profiles,
$(4,0)$ and $(0,4)$ (see Example \ref{ex:m4}).

For $m=5$, the inversion formula together with a similar argument imply $18$ integer solutions $(|\mathcal N_\delta|,q_\delta)$.
However, only three spectral profiles actually occur for decorations of $G_2(5)$ (see Example \ref{ex:m5hypergraph}). Thus the moment identities are very far from characterizing
realizability. Even in the smallest nontrivial examples they provide
necessary conditions for the existence of a spectral profile, but not
sufficient ones.
\end{remark}

\section{Examples}\label{sec:exa}
Theorem~\ref{thm:localpaschspectrum} reduces the spectral classification of block idempotents to the single parameter $n_{-1}^{\delta}(b)$, the corresponding spectra being the spectral types $\sigma_k$ introduced in \eqref{sigmanot}. The following examples illustrate this classification for the smallest Grassmannian geometries and show how these types emerge in concrete cases.

\begin{example}[The case $m = 2$]
The geometry $G_2(2)$ consists of a single point and contains no blocks. The associated algebra is therefore trivial.
\end{example}

\begin{example}[The case $m= 3$]
The geometry $G_2(3)$ consists of a single block. The two possible decorations of $G_2(3)$ are gauge equivalent so both decorated incidence algebras are isomorphic with the incidence algebra of the trivial decoration. In this case $R=\varnothing$ and
$n^{\delta}_{-1}=0$. Theorem \ref{thm:localpaschspectrum} yields
$$
\Spec(L_\delta(c_b^\delta))=\sigma_0
=
\{1,(-\tfrac12)^2\}.
$$
The $3$-dimensional incidence algebra associated with $G_2(3)$ provides the $3$-dimensional building block basic to the present approach. In particular, this is precisely the spectrum of the distinguished block idempotent in the one-block algebra generated by $b=\{ij,ik,jk\}$,
which occurs repeatedly as the local component $\valg_{\mathcal{P}_0}$ and the triangle modules $\valg_{W_r}$ in the decomposition of larger Grassmannian geometries.
\end{example}

\begin{example}[$m=4$: Pasch configurations]\label{ex:m4}
For $m=4$, there holds $|R|=1$, so $n^{\delta}_{-1}\in\{0,1\}$, and, because $|G_2(4)|=\binom42=6$, the decorated incidence algebra $\alg^\delta(G_2(4))$ is $6$-dimensional.
The two possible spectra are
\begin{align*}
&\sigma_0 =\{1^2,(-\tfrac12)^4\},&&\sigma_1=\{1,(-1),(-\tfrac12)^2,(\tfrac12)^2\}.
\end{align*}
In particular, $m=4$ is the first case where the eigenvalue $-1$ can occur.

The spectrum $\sigma_1$ can be viewed as the Peirce spectrum of the elementary $6$-dimensional Hsiang-type subalgebra generated by a block together with its unique adjacent triangle module \cite{Fox-Tka2026b}.
Because any two blocks are incident, the decomposition
$G_2(4) =
\mathcal{P}_0
\sqcup
W_r$ contains no residual part $\mathcal{P}_2$. Consequently, every spectral contribution comes from the two copies of the local geometry $G_2(3)$, namely the distinguished block $\mathcal{P}_0$ and the unique triangle module $W_r$.
In particular, the two possible spectra $\sigma_{0}$ and $\sigma_{1}$ correspond with the two possible signs
$$
\kappa_{ijkr}=\pm1
$$
of the unique Pasch configuration through the fixed block.
\end{example}

\begin{example}[$m = 5$: the Desargues configuration as a Pasch hypergraph]
\label{ex:m5hypergraph}

The geometry $G_2(5)$ has $|G_2(5)|=\binom52=10$ points and $|\mathcal B|=\binom53=10$ blocks.
Figure~\ref{fig:G25paschhypergraph} illustrates the situation for the
Desargues configuration $G_2(5)$. The ten vertices of the figure correspond to the ten blocks of
$G_2(5)$.
Each colored tetrahedron represents one of the five Pasch
configurations of $G_2(5)$. Precisely, the tetrahedron labelled
$P_{ijkl}$ has as its vertices the four blocks $ijk$, $ijl$, $ikl$, and $jkl$.

\begin{figure}[h]
\centering
\begin{tikzpicture}[scale=1.1,line join=round,line cap=round,every node/.style={font=\scriptsize}]
\coordinate (b123) at (0,2.35);
\coordinate (b124) at (-1.55,1.25);
\coordinate (b125) at (1.55,1.25);
\coordinate (b134) at (-1.55,-0.35);
\coordinate (b135) at (1.55,-0.35);
\coordinate (b145) at (-1.15,-1.75);
\coordinate (b234) at (-0.45,0.25);
\coordinate (b235) at (0.45,0.25);
\coordinate (b245) at (0,-1.30);
\coordinate (b345) at (1.15,-1.75);
\fill[red!35,opacity=.20] (b123)--(b124)--(b134)--cycle;
\fill[red!35,opacity=.20] (b123)--(b124)--(b234)--cycle;
\fill[red!35,opacity=.20] (b123)--(b134)--(b234)--cycle;
\fill[red!35,opacity=.20] (b124)--(b134)--(b234)--cycle;
\draw[red!70!black,thick] (b123)--(b124)--(b134)--(b123) (b123)--(b234)--(b124) (b134)--(b234);
\fill[blue!30,opacity=.12] (b123)--(b125)--(b135)--cycle;
\fill[blue!30,opacity=.12] (b123)--(b125)--(b235)--cycle;
\fill[blue!30,opacity=.12] (b123)--(b135)--(b235)--cycle;
\fill[blue!30,opacity=.12] (b125)--(b135)--(b235)--cycle;
\draw[blue!60!black] (b123)--(b125)--(b135)--(b123) (b123)--(b235)--(b125) (b135)--(b235);
\fill[green!35,opacity=.12] (b124)--(b125)--(b145)--cycle;
\fill[green!35,opacity=.12] (b124)--(b125)--(b245)--cycle;
\fill[green!35,opacity=.12] (b124)--(b145)--(b245)--cycle;
\fill[green!35,opacity=.12] (b125)--(b145)--(b245)--cycle;
\draw[green!50!black] (b124)--(b125)--(b145)--(b124) (b124)--(b245)--(b125) (b145)--(b245);
\fill[orange!40,opacity=.12] (b134)--(b135)--(b145)--cycle;
\fill[orange!40,opacity=.12] (b134)--(b135)--(b345)--cycle;
\fill[orange!40,opacity=.12] (b134)--(b145)--(b345)--cycle;
\fill[orange!40,opacity=.12] (b135)--(b145)--(b345)--cycle;
\draw[orange!70!black] (b134)--(b135)--(b145)--(b134) (b134)--(b345)--(b135) (b145)--(b345);
\fill[purple!35,opacity=.12] (b234)--(b235)--(b245)--cycle;
\fill[purple!35,opacity=.12] (b234)--(b235)--(b345)--cycle;
\fill[purple!35,opacity=.12] (b234)--(b245)--(b345)--cycle;
\fill[purple!35,opacity=.12] (b235)--(b245)--(b345)--cycle;
\draw[purple!70!black] (b234)--(b235)--(b245)--(b234) (b234)--(b345)--(b235) (b245)--(b345);
\foreach \v/\lab in {
b123/123,
b124/124,
b125/125,
b134/134,
b135/135,
b145/145,
b234/234,
b235/235,
b245/245,
b345/345}
{
\node[circle,draw,fill=white,inner sep=1.5pt] at (\v) {$\lab$};
}
\node[red!70!black] at (-2.35,0.55) {$P_{1234}$};
\node[blue!60!black] at (2.35,0.55) {$P_{1235}$};
\node[green!50!black] at (0.05,-2.10) {$P_{1245}$};
\node[orange!70!black] at (-1.95,-1.35) {$P_{1345}$};
\node[purple!70!black] at (1.95,-1.35) {$P_{2345}$};
\end{tikzpicture}
\caption{The five Pasch tetrahedra of $G_2(5)$. The ten vertices are the blocks of $G_2(5)$, and each colored tetrahedron represents one Pasch configuration.}
\label{fig:G25paschhypergraph}
\end{figure}

A decoration induces a sign on each Pasch configuration. Geometrically,
this amounts to declaring each tetrahedron positive or negative.
Consequently, $n_{-1}^{\delta}(b)$ is simply the number of negative tetrahedra incident with the vertex
$b$. Thus the spectral type of a block is determined by the local
incidence pattern of negative tetrahedra around the corresponding
vertex. For $m=5$, every block belongs to exactly two Pasch configurations, hence
$$
0\le n_{-1}^{\delta}(b)\le 2.
$$
This immediately explains why exactly three spectral types occur in
$G_2(5)$.

Figure~\ref{fig:G25paschhypergraph} also admits a natural interpretation in terms of the regular
$4$-simplex. The five Pasch configurations correspond to its five
tetrahedral facets, while the ten blocks correspond to its ten
triangular $2$-faces. In this model,
$n_{-1}^{\delta}(b)$ counts the number of negative tetrahedral
facets incident with the triangular face corresponding to $b$.

Since
$|R|=2$, Corollary~\ref{cor:realizable} implies that
$n^{\delta}_{-1}\in\{0,1,2\}$ and there occur exactly three spectral types, namely the types listed in
Table~\ref{tab:G25spectra}.

\begin{table}[h]
\centering
\begin{tabular}{c|l}
$n^{\delta}_{-1}$ &
$\Spec(L_\delta(c_b^\delta))$
\\
\midrule
$0$
&
$\sigma_0=\{0,1^3,(-\tfrac12)^6\}$
\\
$1$
&
$\sigma_1=\{0,1^2,(-1),(-\tfrac12)^4,(\tfrac12)^2\}$
\\
$2$
&
$\sigma_2=\{0,1,(-1)^2,(-\tfrac12)^2,(\tfrac12)^4\}$
\\
\end{tabular}
\caption{Possible spectra of block idempotents in $G_2(5)$.}
\label{tab:G25spectra}
\end{table}

The case $m=5$ occupies a distinguished position in the Grassmannian
family. Indeed,
$|\mathcal{P}_2| = \binom{m-3}{2} = 1$, so that $G_2(5)$ is the smallest member of the family in which the
residual Grassmannian appears, and it does so with multiplicity
one. Thus the block spectrum is still almost entirely governed by the
local Pasch data. Starting with $m=6$, the residual contribution
$|\mathcal{P}_2| = \binom{m-3}{2}$ grows quadratically, and the eigenvalue $0$ becomes an essential
component of the spectrum.

This example concludes with a concrete illustration of the first and second moment
identities. Consider the decoration such that $\mathcal N_\delta=\{1234\}$, so that the tetrahedron corresponding to $P_{1234}$ is the unique
negative Pasch in Figure~\ref{fig:G25paschhypergraph}. In the Pasch hypergraph the four vertices $123$, $124$, $134$, and $234$ have degree $1$, while the remaining six vertices have degree $0$.
Hence $|T_0|=6$, $|T_1|=4$, and $|T_2|=0$, so that the spectral profile is
$
\mathbf T_\delta=(6,4,0).
$
Moreover,
$$
0\cdot |T_0|
+
1\cdot |T_1|
+
2\cdot |T_2|
=
4
=
4|\mathcal N_\delta|,
$$
as guaranteed by the handshake identity \eqref{Handshake} of Theorem~\ref{thm:highermoments}. Since there is only one negative Pasch, there are no pairs of distinct
negative Pasch configurations sharing a block. Thus
$q_\delta^{(2)}=0,$ and
$$
0^2\cdot |T_0|
+
1^2\cdot |T_1|
+
2^2\cdot |T_2|
=
4
=
4|\mathcal N_\delta|+2q_\delta^{(2)}.
$$
\end{example}

\section{Concluding remarks and observations}\label{sec:concl}

The results obtained in this paper suggest several directions for
further investigation. Rather than pursuing them here, we conclude by
summarizing a number of observations related
to the realization problem for spectral profiles and which merit more
systematic study elsewhere.

Recall the notion of gauge equivalence of decorations described in
the introduction. A sign function
$\varepsilon:\mathcal P\to\{\pm1\}$ on points transforms a decoration
$\delta$ into
$$
\delta^\varepsilon(\{p,q,r\})
=
\varepsilon(p)\varepsilon(q)\varepsilon(r)\delta(\{p,q,r\}),
$$
and two decorations are gauge equivalent if they are related in this way. A
direct computation shows that the Pasch signs $\kappa_\delta$ defined in \eqref{kappaP} are
invariant under this action. Consequently, gauge equivalent decorations
determine the same collection $\mathcal N_\delta$ of negative Pasch
configurations, hence the same negative-Pasch hypergraph
$\mathscr H_\delta$, and therefore the same spectral profile. Thus the
spectral invariants considered here factor through the
quotient of the decoration space by gauge equivalence, and the
realization problem for spectral profiles is naturally formulated for
gauge-equivalence classes of decorations rather than for individual decorations.

Combining the bijection \eqref{rankd2} between Pasch-sign patterns and
gauge-equivalence classes with the rank computation \eqref{rankd1}, both
established in Section~\ref{sec:d-complex}, the number of
gauge-equivalence classes of decorations of $G_2(m)$ -- called the
\emph{gauge number} of $G_2(m)$ and denoted by $\gamma(G_2(m))$ --
equivalently the number of realizable Pasch-sign patterns -- is
\begin{equation}\label{gaugecount}
\gamma(G_2(m))
=
2^{\binom{m}{3}-\binom{m-1}{2}}
=
2^{\binom{m-1}{3}}.
\end{equation}

The spectral profile introduced in Section~\ref{sec:dist} records the
global distribution of local spectral types or, equivalently, the degree
distribution of $\mathscr H_\delta$. A convenient way to encode this
information is the generating polynomial
\begin{equation}
F_\delta(t)
=
\sum_{k\ge0}|T_k|t^k.
\end{equation}
By the moment identities of Section~\ref{sec:dist}, the spectral profile is
equivalent to the collection of overlap invariants \eqref{overlapinvariants}.
Thus the realization problem may be viewed either as the problem of
describing all realizable spectral profiles or, equivalently, as the
problem of characterizing all realizable overlap invariants modulo gauge
equivalence.

These considerations acquire additional content once the symmetries of
the geometry itself are taken into account. The automorphism group of
$G_2(m)$, that is, the group of permutations of the points preserving
the set of blocks, is isomorphic with the permutation group of the underlying $m$-set:
\begin{equation}\label{autsm}
\Aut(G_2(m))
\cong
S_m,
\qquad
m\ge3.
\end{equation}
A permutation of $\Omega$ acts on points and blocks via its induced actions
on $2$-subsets and $3$-subsets. The induced homomorphism from $S_m$ to $\Aut(G_2(m))$ is injective for $m\ge3$.
For surjectivity, it
suffices to note that the collinearity graph of $G_2(m)$, in which two
points are adjacent whenever they lie in a common block, is the Johnson
graph $J(m,2)$, whose automorphism group is $S_m$ for all $m\ne4$
\cite{Ramras-Donovan,Ganesan,Brouwer-Cohen-Neumaier}; since an
automorphism of the geometry preserves collinearity, it must lie
in $S_m$. In the exceptional case $m=4$ the graph $J(4,2)$ is the
octahedron and admits in addition the complementation
$ij\mapsto kl$, but this map sends the block $\{ij,ik,jk\}$ to
$\{kl,jl,il\}$, which is not a block, so that \eqref{autsm} persists.
For $m=5$ the identity \eqref{autsm} is the classical statement that the
Desargues configuration has automorphism group $S_5$ of order $120$
\cite{Knarr-Stroppel-Stroppel}.

The automorphism and gauge equivalence groups act on the decoration space in complementary ways, and it
is their combined action that governs the classification. The cochain
complex above is $S_m$-equivariant: the group acts on each $C^k$ by
permuting subsets of $\Omega$, and every coboundary operator $d_i$
commutes with this action. In particular $\operatorname{im}d_1$ is
$S_m$-invariant, so that $S_m$ acts on the quotient
$
C^2/\operatorname{im}d_1
\cong
\operatorname{im}d_2,
$
that is, on the set of $\gamma(G_2(m))$ gauge classes counted by
\eqref{gaugecount}. Two decorations lying in the same orbit differ by a
gauge transformation followed by an automorphism of the geometry; they
therefore determine isomorphic decorated incidence algebras and, in
particular, the same spectral profile. Write $N_m$ for the number of
$S_m$-orbits on gauge classes, which by Burnside's lemma equals
\begin{equation}\label{burnside}
N_m
=
\frac1{m!}
\sum_{\sigma\in S_m}
2^{\dim\operatorname{Fix}_\sigma(\operatorname{im}d_2)},
\end{equation}
and $a_m$ for the number of distinct spectral profiles of $G_2(m)$. Passing from decorations to gauge classes, then to isomorphism types, and
finally to spectral profiles yields orbit spaces having successive dimensions:
\footnote{The first reduction in \eqref{chain} is drastic, and the second is the
passage to orbits under a group of order $m!$. Asymptotically
$N_m\sim\gamma(G_2(m))/m!$, the identity term dominating the Burnside sum
\eqref{burnside}, although for the small values collected in
Table~\ref{tab:profilecounts} the action is far from free and $N_m$
exceeds $\gamma(G_2(m))/m!$ by factors of $22.5$, $11.3$ and $2.0$ for
$m=5,6,7$ respectively. The third reduction is of a different nature.}
\begin{equation}\label{chain}
2^{\binom m3}
>
\gamma(G_2(m))
\ge N_m
\ge a_m.
\end{equation}
Computational experiments indicate that the set of realizable spectral
profiles is highly constrained. As explained in Example \ref{ex:m5hypergraph}, for $G_2(5)$ there are only three
profiles, with generating polynomials
$$
F_\delta(t)
=
10,
\qquad
3+6t+t^2,
\qquad
4t+6t^2.
$$
Table~\ref{tab:profilecounts} exhibits the values of
\eqref{chain} for the smallest Grassmannian geometries.

\begin{table}[h]
\centering
\begin{tabular}{c|ccc}
Geometry &
$\gamma(G_2(m))$ &
$N_m$ &
$a_m$
\\
\midrule
$G_2(4)$ & $2^{1}=2$ & $2$ & $2$ \\
$G_2(5)$ & $2^{4}=16$ & $3$ & $3$ \\
$G_2(6)$ & $2^{10}=1024$ & $16$ & $16$ \\
$G_2(7)$ & $2^{20}=1048576$ & $423$ & $185$
\end{tabular}
\smallskip
\caption{The three reductions of \eqref{chain}
}
\label{tab:profilecounts}
\end{table}

The three columns of Table~\ref{tab:profilecounts} correspond to three
successive acts of forgetting. Passing from decorations to gauge classes
forgets a choice of basis, and passing from gauge classes to their
$S_m$-orbits forgets the names of the elements of $\Omega$; neither step
discards anything about the object itself, so $\gamma(G_2(m))$ and $N_m$
are merely increasingly honest counts of the genuinely distinct decorated
algebras. The spectral profile is different: it is the degree
distribution of $\mathscr H_\delta$, and a degree distribution measures a
hypergraph rather than describing it. It is therefore the comparison of
$N_m$ with $a_m$ --- not of $\gamma(G_2(m))$ with $a_m$ --- that carries
the real information.

For $m \leq 6$, $N_m$ and $a_m$ are equal. In this range the degree data of the
negative-Pasch hypergraph happen to determine the hypergraph itself, so
that the spectral profile is a \emph{complete} invariant; it separates
decorated incidence algebras up to gauge equivalence and symmetry of the
underlying geometry.
At $G_2(7)$ this fails, $423$ orbits yields only
$185$ distinct spectral profiles. Thus $m=7$ is the first value
at which the spectral profile ceases to be complete, and it is also the
first value for which the residual Grassmannian $G_2(m-3)$ carries a Pasch
configuration of its own; whether the two phenomena are related is not
clear. For $m=8$ exhaustive enumeration becomes impractical, since
by \eqref{gaugecount} there are already $\gamma(G_2(8))=2^{35}$ Pasch-sign
patterns to inspect; random sampling nevertheless produces thousands of
distinct profiles, showing that $a_m$ continues to grow rapidly.

At present no conceptual explanation of these numbers is available. The
preceding discussion suggests several natural questions. Is there an
intrinsic characterization of the realizable overlap invariants \eqref{overlapinvariants}?
Which additional algebraic or combinatorial constraints distinguish
realizable profiles from formally admissible solutions of the moment
identities?
Finally, what accounts for the
collapse of $S_m$-orbits onto spectral profiles beginning at $m=7$, and
is there an intrinsic invariant refining the spectral profile which
remains complete for all $m$?

A different direction concerns how much of the theory depends on Pasch
configurations specifically. By the block decomposition of
Section~\ref{sec:proof}, the points one Pasch configuration away from a
block $b$ --- namely $b$ itself together with the petals $W_r$, $r\in
R$ --- already account for the entire nonzero Peirce spectrum of the
block idempotent, the residual copy of $G_2(m-3)$ contributing only the zero eigenspace. In this precise sense the nonzero spectrum closes after a single step away from $b$, and Pasch configurations are simply the combinatorial name for that step. We expect this to be an instance of a more general phenomenon, in which the nonzero spectrum of an idempotent in a Steiner-system algebra is controlled by a bounded neighbourhood of its support, with correspondingly larger local configurations governing larger systems such as projective geometries over $\mathbb F_2$. A
systematic study of these local closure phenomena lies beyond the scope of the present paper and will be pursued elsewhere.
Thus the realization problem for spectral profiles remains open even for the Grassmannian geometries $G_2(m)$, while the corresponding theory for more general Steiner systems is still largely unexplored.

\begin{example}[The determinant and the permanent]\label{rem:detper}
It is instructive to test the invariants of this section on an example
lying outside the Grassmannian family. Expanding the determinant of a
$3\times3$ matrix, take as points the nine entries $a_{rc}$ and as blocks
the six monomials
$a_{1\sigma(1)}a_{2\sigma(2)}a_{3\sigma(3)}$, $\sigma\in S_3$.
Two indices lie in a common monomial precisely when they occupy distinct rows and distinct columns, and then they lie in exactly one, so this is a
partial Steiner triple system: the configuration $(9_2,6_3)$. Identifying the nine points with $AG(2,3)=\mathbb F_3\times\mathbb F_3$ by
$a_{rc}\leftrightarrow(r,c)$, the six blocks are the lines of the two
parallel classes of nonzero finite slope, $y=x+b$ and $y=2x+b$
($b\in\mathbb F_3$); the former class consists of the three even
permutations of $S_3$ and the latter of the three odd ones.
The incidence algebra determined by the trivial decoration yields the
permanent, while the decorated incidence algebra of the decoration
$\delta(b_\sigma)=\operatorname{sgn}(\sigma)$ yields the determinant.

Three features are relevant here. First, this geometry contains
\emph{no} Pasch configurations at all: any four blocks include two from
the same parallel class, which are disjoint. Hence, in the notation \eqref{d0d1d2}, $d_2=0$,  $\ker d_2/\operatorname{im}d_1\cong\mathbb F_2$, the nontrivial class being represented by $\operatorname{sgn}$. Indeed the
three even monomials together use each of the nine entries exactly once,
and so do the three odd ones, so that
$\prod_{\text{even}}\delta^\varepsilon=\prod_{\text{odd}}\delta^\varepsilon$
for every gauge transformation $\varepsilon$, while $\operatorname{sgn}$
gives $+1$ and $-1$. Thus determinant and permanent \textit{are not gauge
equivalent}, and they exhaust the two gauge classes. In contrast to
$G_2(m)$, the Pasch-sign pattern is here not merely insufficient to
separate them: it is vacuous.
Second, the two gauge classes are not identified by any automorphism of the
configuration, since the two algebras are not isomorphic. In the notation
of \eqref{chain} this gives $N=2$ while $a=1$, both algebras having the
same block spectral profile. The inequality $a\le N$ of \eqref{chain} is
therefore strict already in this smallest of examples, and the collapse
observed for $G_2(7)$ is not a large-$m$ phenomenon.

Third, the distinction is recovered as soon as one leaves the block
idempotents.
The contrast between the two decorations of $(9_2,6_3)$ is already visible
in their idempotents.
In the determinant algebra the nonzero idempotents are a single $3$-dimensional orbit of the orthogonal group acting by algebra automorphisms with trivial stabilizers, a manifestation of the strong spectral rigidity of Hsiang algebras.
In the permanent algebra the variety of nonzero idempotents has several components, and the spectrum is no longer constant along it.

\begin{center}
\begin{tabular}{l|l|l}
Algebra & Idempotents
& Peirce spectrum\\
\midrule
$\det$
 & $\tfrac12O$, $O\in O(3)$ \ (dim $3$)
 & $\{1,(\tfrac12)^3,(-\tfrac12)^5\}$\\
\midrule
$\operatorname{per}$
 & $\tfrac14uv^{T}$, $u,v\in\{\pm1\}^3$ \ ($16$ isolated)
 & $\{1,(\tfrac14)^4,(-\tfrac12)^4\}$\\
 & $\tfrac12P$, $P$ a signed permutation
 & $\{1,(\tfrac12)^3,(-\tfrac12)^5\}$\\
 &(block idempotents)\\
 & circles through the previous ones
 & $\{1,\tfrac12,(-\tfrac12)^3,\pm s,\pm t\}$,\\
 && where $s^2+t^2=\tfrac12$
\end{tabular}
\end{center}

The circles are given explicitly by
$$
c=
\begin{pmatrix}
0&a&b\\ \tfrac12&0&0\\ 0&b&a
\end{pmatrix},
\qquad
a^2+b^2=\tfrac14,
$$
together with their images under the symmetries of the configuration;
along such a circle the two free eigenvalues satisfy
$s^2+t^2=\tfrac12$ and vary continuously, so that the permanent algebra
has infinitely many distinct idempotent spectra. The block idempotents
are precisely the points $s=t=\tfrac12$ of these circles, where several
of them cross; they are singular points of the idempotent variety.

Two consequences are worth recording. First, the block idempotents of the
two algebras have the \emph{same} Peirce spectrum, so no invariant built
from block idempotents can separate the determinant from the permanent.
Second, what does separate them lives on the full idempotent variety: one
spectrum in the first case, a one-parameter family of spectra in the
second.
\end{example}


\subsection*{Acknowledgment}
V.G.T. has been supported by the Stiftelsen L\"angmanska kulturfonden, Grant BA26-2924.
Part of this work was presented by the second author (V.G.T.) at the 14th International Conference on Clifford Algebras and Their Applications in Mathematical Physics (ICCA14). He thanks the organizers for the invitation and hospitality, and for providing an excellent opportunity to discuss this work with the conference participants.


\def\polhk#1{\setbox0=\hbox{#1}{\ooalign{\hidewidth
  \lower1.5ex\hbox{`}\hidewidth\crcr\unhbox0}}} \def\cprime{$'$}
  \def\cprime{$'$} \def\cprime{$'$}
  \def\polhk#1{\setbox0=\hbox{#1}{\ooalign{\hidewidth
  \lower1.5ex\hbox{`}\hidewidth\crcr\unhbox0}}} \def\cprime{$'$}
  \def\cprime{$'$} \def\cprime{$'$} \def\cprime{$'$}
  \def\polhk#1{\setbox0=\hbox{#1}{\ooalign{\hidewidth
  \lower1.5ex\hbox{`}\hidewidth\crcr\unhbox0}}} \def\cprime{$'$}
  \def\Dbar{\leavevmode\lower.6ex\hbox to 0pt{\hskip-.23ex \accent"16\hss}D}
  \def\cprime{$'$} \def\cprime{$'$} \def\cprime{$'$} \def\cprime{$'$}
  \def\cprime{$'$} \def\cprime{$'$} \def\cprime{$'$} \def\cprime{$'$}
  \def\cprime{$'$} \def\cprime{$'$} \def\cprime{$'$} \def\dbar{\leavevmode\hbox
  to 0pt{\hskip.2ex \accent"16\hss}d} \def\cprime{$'$} \def\cprime{$'$}
  \def\cprime{$'$} \def\cprime{$'$} \def\cprime{$'$} \def\cprime{$'$}
  \def\cprime{$'$} \def\cprime{$'$} \def\cprime{$'$} \def\cprime{$'$}
  \def\cprime{$'$} \def\cprime{$'$} \def\cprime{$'$} \def\cprime{$'$}
  \def\cprime{$'$} \def\cprime{$'$} \def\cprime{$'$} \def\cprime{$'$}
  \def\cprime{$'$} \def\cprime{$'$} \def\cprime{$'$} \def\cprime{$'$}
  \def\cprime{$'$} \def\cprime{$'$} \def\cprime{$'$} \def\cprime{$'$}
  \def\cprime{$'$} \def\cprime{$'$} \def\cprime{$'$} \def\cprime{$'$}
  \def\cprime{$'$} \def\cprime{$'$}
\providecommand{\bysame}{\leavevmode\hbox to3em{\hrulefill}\thinspace}
\providecommand{\MR}{\relax\ifhmode\unskip\space\fi MR }
\providecommand{\MRhref}[2]{%
  \href{http://www.ams.org/mathscinet-getitem?mr=#1}{#2}
}
\providecommand{\href}[2]{#2}

\end{document}